\newcounter{thmcounter}
\numberwithin{thmcounter}{section}
\numberwithin{equation}{thmcounter}
\newtheorem{theorem}[thmcounter]{Theorem}
\newtheorem{proposition}[thmcounter]{Proposition}
\newtheorem{lemma}[thmcounter]{Lemma}
\newtheorem{corollary}[thmcounter]{Corollary}
\newtheorem*{Ein}{Ein's Lemma}
\theoremstyle{definition}
\newtheorem{definition}[thmcounter]{Definition}
\newtheorem{remark}[thmcounter]{Remark}
\newtheorem*{SitA}{Situation A}
\newtheorem*{SitB}{Situation B}
\newtheorem*{SitC}{Situation C}
\newtheoremstyle{claim}{9pt}{3pt}{}{\parindent}{\bf}{.}{1em}{}
\theoremstyle{claim}
\newtheorem{claim}[equation]{Claim}
\DeclareMathOperator{\tr}{tr}                  
\newenvironment{namelist}[1]{%
\begin{list}{}
{
\settowidth{\labelwidth}{#1}%
\setlength{\labelsep}{0.3em}%
\setlength{\leftmargin}{\labelwidth}%
\addtolength{\leftmargin}{\labelsep}}}{%
\end{list}}
\newcommand{\nR}{\mathbb{R}}                     
\newcommand{\nC}{\mathbb{C}}                     
\newcommand{\nQ}{\mathbb{Q}}                     
\newcommand{\nP}{\mathbb{P}}                     
\newcommand{\nA}{\mathbb{A}}                     
\newcommand{\sF}{\mathscr{F}}
\newcommand{\sO}{\mathscr{O}}                    
\newcommand{\sH}{\mathscr{H}}
\newcommand{\sI}{\mathscr{I}}                    
\newcommand{\sK}{\mathscr{K}}
\newcommand{\sQ}{\mathscr{Q}}
\newcommand{\I}{\mathscr{I}}
\newcommand{\mf}[1]{\mathfrak{#1}}
\DeclareMathOperator{\Bl}{Bl}                    
\DeclareMathOperator{\Bs}{Bs}                    
\DeclareMathOperator{\codim}{codim}              
\DeclareMathOperator{\lct}{lct}                  
\DeclareMathOperator{\Supp}{Supp}                
\DeclareMathOperator{\sHom}{\mathscr{H}om}       
\DeclareMathOperator{\Spec}{Spec}                
\DeclareMathOperator{\spec}{Spec}                
\DeclareMathOperator{\reg}{reg}                  
\DeclareMathOperator{\Exc}{Exc}                  
\newcommand{\ord}{\mbox{ord}}                    
\newcounter{rkcounter}             
\begin{document}

\title[Singularities of generic linkage]{Singularities of generic linkage of algebraic varieties}
\author{Wenbo Niu}

\address{Department of Mathematics, Purdue University, West Lafayette, IN 47907-2067, USA}
\email{niu6@math.purdue.edu}

\subjclass[2010]{13C40, 14M06}

\keywords{Generic linkage, singularity, log canonical pair, multiplier ideal sheaf, Castelnuovo-Mumford regularity}

\begin{abstract} Let $Y$ be a generic link of a subvariety $X$ of a nonsingular variety $A$. We give a description of the Grauert-Riemenschneider canonical sheaf of $Y$ in terms of the multiplier ideal sheaves associated to $X$ and use it to study the singularities of $Y$. As the first application, we give a criterion when $Y$ has rational singularities and show that log canonical threshold increases and log canonical pairs are preserved in generic linkage. As another application we give a quick and simple liaison method to generalize the results of de Fernex-Ein and Chardin-Ulrich on the Castelnuovo-Mumford regularity bound for a projective variety.

\end{abstract}

\maketitle

\section{Introduction}

\noindent Let $A$ be either a nonsingular affine variety or a projective space $\nP^n$ over the field of complex number $\nC$. Two closed subvarieties of $A$ are said to be geometrically linked if the union of them is a complete intersection in $A$. Let us fix a reduced equidimensional subscheme $X$ of $A$ of codimension $c$ for now. By choosing $c$ equations carefully among the defining equations of $X$ we define a complete intersection $V$ and obtain a subscheme $Y$ as the closure of the complement of $X$ in $V$.  Then $Y$ is geometrically linked to $X$ via $V$, i.e., $Y\cup X=V$ and $Y$ has no common irreducible components with $X$. If such complete intersection $V$ is chosen to be as general as possible, then $Y$ is called a generic link of $X$.

The study of linkage, or the theory of liaison, of algebraic varieties can be traced back to hundreds of years ago. The recent work in this area was initiated by Peskine and Szpiro \cite{PeskineSzpiro:LiaisonI}. After that, linkage has attracted considerable attention and has been developed widely and deeply from both geometric and algebraic point of views.

Let us state the construction of a generic link more concretely to make our introduction clear. Assume further that $X$ is defined by an ideal $I_X$ generated by equations $f_1,\cdots, f_t$. The essential point is to create a general complete intersection $V$ by using the equations $f_i$'s. There are three situations in current research to do this, which we list as follows.

\begin{SitA} Suppose $A=\Spec R$ is affine. A complete intersection $V$ is defined by the equations $\alpha_i=U_{i,1}f_1+\cdots+U_{i,t}f_t$, for $i=1,\cdots, c$, where $U_{i,j}$'s are variables. $V$ is actually defined in an extended space $\Spec R[U_{i,j}]$. By abuse of notation, we still write $A=\Spec R[U_{i,j}]$ and $X$ is defined by $I_X[U_{ij}]$. This only occurs in the introduction. New notation which avoids any ambiguity will be introduced in Definition \ref{def01} and adopted henceforth.
\end{SitA}

\begin{SitB} Suppose $A=\Spec R$ is affine. A complete intersection $V$ is defined by the equations $\alpha_i=a_{i,1}f_1+\cdots+a_{i,t}f_t$, for $i=1,\cdots, c$, where $a_{i,j}$'s are general scalars in $\nC$.
\end{SitB}

\begin{SitC} Suppose $A=\nP^n$ and the homogeneous equations $f_i$'s of $X$ have degrees $d_1\geq d_2\geq \cdots \geq d_t$. A complete intersection $V$ is defined by choosing general equations  $\alpha_i$ from $H^0(\nP^n,I_X(d_i))$, for $i=1,\cdots, c$.
\end{SitC}

\noindent In each of the three situations, once having a complete intersection $V$ in hand, a generic link $Y$ of $X$ can be obtained by algebraic construction, namely, by an ideal $I_Y:=(I_V:I_X)$. (We sometimes use sheaf notation, e.g. $\sI_X$, to view an ideal as a sheaf.)

The terminology of generic linkage usually refers to Situation A. The theory based on this setting has been founded and developed deeply by Huneke and Ulrich in the last three decades, mainly from algebraic side of the story (e.g. \cite{HunekeUlrich:AlgLinkage}, \cite{HunekeUlrich:SturLinkage}). Situation C was studied from the geometric side. The book of Miglior \cite{Migliore:LiaisonTheory} gives a outline of the theory along this direction. It is worth mentioning that the way to construct a generic link in Situation $C$ is quite classical and a typical application can be found in the work of Betram, Ein and Lazarsfeld \cite{BEL} on the Castelnuovo-Mumford regularity bound for a smooth projective variety. Situation B can be considered as a specialization of Situation A as well as a local version of Situation C. As an important technique it has been used in recent studies of singularities, for instance, in the work of de Fernex and Docampo \cite{Roi:JDiscrepancy} and the work of Ein and Mustata \cite{Ein:JetSch}.

However, in contrast to the quick and deep development of singularity theories in the past decades, much less has been known about the singularities in generic linkage. A few special examples have drawn attention recently. One important case, which serves as a guideline of this paper, is a result of Chardin and Ulrich \cite{CU:Reg} that if $X$ is a local complete intersection and has rational singularities then a generic link of $X$ has rational singularities. But the machinery behind  the result seems quite mysterious from the geometric point of view. Meanwhile, experience gained from research in the past gives the intuition that the singularities of a generic link seem to be worse than $X$ itself.

The main theorem of this paper in the context of Situations A, B and C is the following. Recall that $A$ is either a nonsingular affine variety or a projective space and $X$ is a reduced equidimensional subscheme of codimension $c$. We simply write them as a pair $(A,cX)$.

\begin{theorem}\label{mthm}Let $Y$ be a generic link (if it is nonempty) to a pair $(A,cX)$ via $V$. Then $Y$ is reduced equidimensional of codimension $c$, its Grauert-Riemenschneider canonical sheaf is
$$\omega^{GR}_Y\simeq\sI(A,cX)\cdot\sO_Y\otimes \omega_V,$$
where $\sI(A,cX)$ is the multiplier ideal sheaf associated to the pair $(A,cX)$ and $\omega_V$ is the dualizing sheaf of $V$, and it fits into the following commutative diagram
\begin{equation}\label{diagram02}
\xymatrix{
\omega^{GR}_Y \ar[r]^-{\simeq}\ar@{^{(}->}[d]_{\tr} & {\sI(A,cX)\cdot \sO_Y\otimes \omega_V} \ar@{^{(}->}[d]^i   \\
\omega_Y \ar[r]^-{\simeq}  & \sI_X\cdot\sO_Y\otimes \omega_V}
\end{equation}
where the bottom isomorphism is canonical, $\tr$ is the trace map, and the inclusion $i$ is induced by $\sI(A,cX)\hookrightarrow \sI_X$ (cf. Lemma \ref{prop03}). Furthermore, we have inequalities for log canonical thresholds of pairs as $$\lct(A,Y)\geq \lct(A,X).$$
\end{theorem}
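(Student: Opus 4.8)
The plan is to deduce the inequality from the divisorial (valuative) description of the log canonical threshold. Recall that for a reduced equidimensional subscheme $Z\subseteq A$ one has $\lct(A,Z)=\inf_E \frac{a_A(E)+1}{\ord_E(\sI_Z)}$, the infimum taken over all divisorial valuations $E$ over $A$ with $\ord_E(\sI_Z)>0$, where $a_A(E)$ denotes the discrepancy of $E$. Thus it suffices to prove that for every such $E$ one has $a_A(E)+1\ge \lct(A,X)\cdot\ord_E(\sI_Y)$. I will also use the elementary bound $\lct(A,X)\le c$, valid because $X$ has pure codimension $c$.

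The argument rests on two geometric inputs coming from the genericity of $V$, both of which are already available from the construction underlying the canonical sheaf formula. First, because $\sI_V=\sI_X\cap\sI_Y\subseteq\sI_Y$ (the link being geometric, so $V=X\cup Y$ with $V$ reduced), we have $\ord_E(\sI_Y)\le\ord_E(\sI_V)$ for every $E$; and for every divisor $E$ whose center on $A$ is contained in $X$, the generic choice of the equations $\alpha_i=\sum_j U_{i,j}f_j$ forces $\ord_E(\alpha_i)=\min_j\ord_E(f_j)=\ord_E(\sI_X)$, hence $\ord_E(\sI_V)=\min_i\ord_E(\alpha_i)=\ord_E(\sI_X)$. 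Combining the two gives the crucial estimate $\ord_E(\sI_Y)\le\ord_E(\sI_X)$ for every $E$ centered in $X$. Second, a Bertini argument shows that away from $X$ the linear system $\langle f_1,\dots,f_t\rangle$ is base-point-free, so the generic complete intersection $V$, and therefore $Y$, is smooth along $Y\setminus X$; that is, $\Sing(Y)\subseteq X$.

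With these in hand the verification splits into two cases. If the center of $E$ is contained in $X$, then by the first input $\ord_E(\sI_Y)\le\ord_E(\sI_X)$, so $\frac{a_A(E)+1}{\ord_E(\sI_Y)}\ge\frac{a_A(E)+1}{\ord_E(\sI_X)}\ge\lct(A,X)$ by the very definition of $\lct(A,X)$. If the center of $E$ is not contained in $X$, then it lies in the smooth locus of $Y$ by the second input, where $Y$ is a smooth subvariety of codimension $c$; hence $\frac{a_A(E)+1}{\ord_E(\sI_Y)}\ge c\ge\lct(A,X)$. Taking the infimum over all $E$ yields $\lct(A,Y)\ge\lct(A,X)$.

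The main obstacle is the order-matching identity $\ord_E(\sI_V)=\ord_E(\sI_X)$ along all divisors centered in $X$ simultaneously. This is transparent in Situation A, where the $U_{i,j}$ are genuine indeterminates: no cancellation can occur among the $U_{i,j}f_j$, so the identity holds for every valuation at once. For Situations B and C one must descend this to sufficiently general scalar (respectively generic global) sections, which is where semicontinuity of orders of vanishing and the openness of the relevant Bertini-type conditions enter; note that only the fixed subscheme $X$, and not the a priori variable residual $Y$, is ever involved, so there is no circularity. A secondary point requiring care is the Bertini step off $X$ in the projective Situation C, where one checks base-point-freeness of the system cut out by the chosen degrees. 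Both facts are precisely the structural properties of the generic link already established en route to the canonical sheaf formula, so they may be invoked rather than reproved.
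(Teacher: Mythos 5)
Your argument covers only the log canonical threshold inequality; the heart of the theorem --- the formula $\omega^{GR}_Y\simeq\sI(A,cX)\cdot\sO_Y\otimes\omega_V$ and the compatibility diagram, which the paper establishes via a factorizing resolution of $X$, the blowup along $\overline{X}$, and a careful comparison of sheaves of regular differential forms (Propositions 3.1 and 3.4) --- is simply invoked, so at best you have proposed a proof of the last assertion. Even there, the proof has a genuine gap: the ``crucial estimate'' $\ord_E(\sI_Y)\le\ord_E(\sI_X)$ for divisors $E$ centered in $X$ rests on the identity $\ord_E(\sI_V)=\ord_E(\sI_X)$, and that identity is false. In Situation A the ``no cancellation'' reasoning only yields $\ord_E(\alpha_i)=\min_j\bigl(\ord_E(U_{i,j})+\ord_E(f_j)\bigr)$, which exceeds $\min_j\ord_E(f_j)$ whenever the valuation is positive on the variables $U_{i,j}$. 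Concretely, let $X$ be a smooth complete intersection with $I_X=(f_1,f_2)$, $c=t=2$, so that $I_Y=(\alpha_1,\alpha_2,\Delta)$ with $\Delta=\det(U_{i,j})$, and let $E$ be the monomial valuation with weight $1$ on $f_1,f_2,U_{11},U_{12},U_{21},U_{22}$ at a point of $X$: its center lies in $X$, yet $\ord_E(\sI_X)=1$ while $\ord_E(\alpha_i)=\ord_E(\Delta)=2$, so $\ord_E(\sI_Y)=2>\ord_E(\sI_X)$. The same failure is unavoidable in Situations B and C for any choice of scalars: since $V$ is cut out by only $c$ elements, $\sI_V$ and $\sI_X$ have different integral closures in general (divisors whose center on the resolution lies in $\overline{Y}\cap f^{-1}(X)$ satisfy $\ord_E(\sI_V)>\ord_E(\sI_X)$), so semicontinuity cannot rescue a statement that is false for every member of the family.

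What saves these valuations is not a small order on $\sI_Y$ but a large discrepancy, and this is exactly how the paper argues (Proposition 3.8). It keeps your first reduction, $\sI_V\subseteq\sI_Y$ hence $\lct(A,Y)\ge\lct(A,V)$, but then proves the \emph{threshold} equality $\lct(A,V)=\lct(A,X)$ by computing both on a single explicit log resolution $f:A'\to A$ (factorizing resolution of $X$, followed by the blowup along $\overline{X}$ with exceptional $T$, followed by the blowup along $\widetilde{V}$ with exceptional $S$). There one reads off $K_{A'/A}=(c-1)S+(c-1)T'+\sum k_iE_i'$, $f^{-1}(X)=T'+\sum a_iE_i'$ and $f^{-1}(V)=S+T'+\sum a_iE_i'$, so the new divisors $S$ and $T'$ contribute ratios equal to $c$, which dominate $\lct(A,X)\le c$; no valuationwise comparison of $\ord_E(\sI_V)$ with $\ord_E(\sI_X)$ is needed, and none would be true. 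In my example above the ratio at $E$ is $6/2=3\ge\lct(A,X)=2$ precisely because $a_A(E)+1=6$ is large, illustrating the mechanism your case analysis misses. Your case 2 (centers off $X$, where $Y$ is a smooth codimension-$c$ complete intersection, giving ratio $\ge c\ge\lct(A,X)$) is sound, but case 1 as written does not hold, so the proposal needs to be replaced by, or repaired along the lines of, the explicit-resolution computation.
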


It should be noticed that in the application of the theorem the dualizing sheaf $\omega_V$ can be replaced by $\omega_A|_V$ in Situations A and B and by $\omega_{\nP^n}(d_1+\cdots+d_c)|_V$ in Situation C.  Also the generic link $Y$ could be reducible in Situations B and C. There are some special case in which $Y$ is irreducible discussed in Section 4.

The aforementioned result of Chardin and Ulrich is an immediate consequence of the theorem. Also the problem that when a generic link has rational singularities now becomes clear. Furthermore the theorem also implies that log canonical pairs are preserved in generic linkage. It should be mentioned that, as pointed out by the referee, the inequality of log canonical thresholds in the theorem is rather straightforward in Situations B and C.

As another application of the theorem in Situation C of projective spaces, we can generalize and extend the results of de Fernex and Ein
\cite{Ein:VanishLCPairs} and Chardin and Ulrich \cite{CU:Reg} on the  Castelnuovo-Mumford regularity bound for a projective variety. The proof is based on liaison theory and provides a natural, geometric approach to study regularity. The original proofs in \cite{CU:Reg} and \cite{Ein:VanishLCPairs} were very different, and the new proof here finds its way somewhere in the middle, leading to a statement which recovers both results. This was kindly suggested by the referee. The following is a simple version and we will give its full general version in Corollary \ref{cor03}.

\begin{corollary}\label{cor02} Let $X\subset \nP^n$ be a reduced equidimensional subscheme of codimension $c$ defined by the equations of degree $d_1\geq d_2\geq \cdots\geq d_t$ and assume that the pair $(\nP^n,cX)$ is log canonical except possibly at finitely many points. Then
$$\reg X\leq \sum^c_{i=1}d_i-c+1$$
and equality holds if and only if $X$ is a complete intersection in $\nP^n$.
\end{corollary}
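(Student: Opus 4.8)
The plan is to prove the bound through a generic link in Situation C. Choose general forms $\alpha_i\in H^0(\nP^n,\sI_X(d_i))$ for $i=1,\dots,c$, let $V$ be the complete intersection they cut out and $Y$ the resulting generic link, and write $\sigma=\sum_{i=1}^c d_i$, so that $\omega_V\simeq\sO_V(\sigma-n-1)$ and a Koszul computation gives $\reg\sI_V=\sigma-c+1$. Since $H^i(\nP^n,\sO(k))=0$ for $0<i<n$, the inequality $\reg\sI_X\le\sigma-c+1$ is equivalent to the two assertions: (a) the restriction $H^0(\nP^n,\sO(\sigma-c))\to H^0(X,\sO_X(\sigma-c))$ is surjective; and (b) $H^j(X,\sO_X(\sigma-c-j))=0$ for $1\le j\le\dim X=n-c$. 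Everything reduces to (a) and (b), and the idea is to transport them to the link $Y$, where Theorem~\ref{mthm} supplies cohomological control.

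For this I use the liaison identifications from the bottom row of \eqref{diagram02}, namely $\omega_Y\simeq\sI_{X/V}\otimes\omega_V$ and, symmetrically, $\omega_X\simeq\sI_{Y/V}\otimes\omega_V$, where $\sI_{X/V}=\sI_X/\sI_V$ and $\sI_{Y/V}=\sI_Y/\sI_V$. Tensoring $0\to\sI_{X/V}\to\sO_V\to\sO_X\to0$ with $\omega_V$ yields
\[
0\longrightarrow\omega_Y\longrightarrow\sO_V(\sigma-n-1)\longrightarrow\sO_X(\sigma-n-1)\longrightarrow0 .
\]
Twisting so that the last term becomes $\sO_X(\sigma-c-j)$ turns the first term into $\omega_Y(n+1-c-j)$; taking cohomology and using that the intermediate cohomology of the complete intersection $V$ vanishes, assertion (b) for $1\le j\le n-c-1$ follows once I know $H^{j+1}(Y,\omega_Y(s))=0$ with $s=n+1-c-j\ge1$, while (a) reduces, via the projective normality of $V$, to $H^1(Y,\omega_Y(n+1-c))=0$. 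The top case $j=n-c$ is not controlled by the sequence (the relevant group lands in $H^{n-c+1}(\omega_Y)=0$), so I treat it by Grothendieck–Serre duality on $X$: $H^{n-c}(X,\sO_X(\sigma-n))^\vee\simeq H^0(X,\omega_X(n-\sigma))\simeq H^0(V,\sI_{Y/V}(-1))$, which is a subsheaf of $H^0(\sO_V(-1))=0$. Thus (a) and (b) all follow from the single statement $H^{\ge1}(Y,\omega_Y(s))=0$ for every $s\ge1$.

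It remains to prove this vanishing, and here Theorem~\ref{mthm} enters. Let $f\colon\widetilde Y\to Y$ be a resolution; Grauert–Riemenschneider vanishing gives $R^if_*\omega_{\widetilde Y}=0$ for $i>0$, and since $f^*\sO_Y(1)$ is nef and big, Kawamata–Viehweg vanishing yields $H^{\ge1}(Y,\omega^{GR}_Y(s))=0$ for $s\ge1$, where $\omega^{GR}_Y=f_*\omega_{\widetilde Y}\simeq\sI(\nP^n,cX)\cdot\sO_Y\otimes\omega_V$. Let $T$ be the cokernel of the trace inclusion $\omega^{GR}_Y\hookrightarrow\omega_Y$, so $T$ is the image on $Y$ of $\sI_X/\sI(\nP^n,cX)$. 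The \textbf{crux of the argument} is that the hypothesis forces $\dim\Supp T=0$: the cokernel is supported exactly where $\sI(\nP^n,cX)\neq\sI_X$, and this locus coincides with the non-log-canonical locus of $(\nP^n,cX)$, which is finite by assumption. Establishing that $\sI(\nP^n,cX)=\sI_X$ holds at every log canonical point — equivalently, that $\omega^{GR}_Y\to\omega_Y$ is an isomorphism away from the non-log-canonical locus — is the step where the hypothesis is genuinely used and is, I expect, the main technical obstacle; for $c=1$ it is automatic by Skoda's theorem, and the real content is in codimension $c\ge2$ with $X$ not a complete intersection. Granting it, $H^{\ge1}(Y,T(s))=0$, and the sequence $0\to\omega^{GR}_Y\to\omega_Y\to T\to0$ propagates the vanishing from $\omega^{GR}_Y$ to $\omega_Y$, giving $H^{\ge1}(Y,\omega_Y(s))=0$ for $s\ge1$ and with it the bound.

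For the equality case, if $X$ is a complete intersection cut out by forms of degrees $d_1,\dots,d_c$ then $Y=\emptyset$ and the Koszul computation gives $\reg\sI_X=\sigma-c+1$. Conversely, if $X$ is not a complete intersection then $Y\neq\emptyset$, and I would rerun the estimate one degree lower, with target $\sigma-c$. In this range (a) and (b) again reduce to $H^{\ge1}(Y,\omega_Y(s))=0$ with $s\ge1$, which still holds, \emph{except} for the top term, where Serre duality now gives $H^{n-c}(X,\sO_X(\sigma-n-1))^\vee\simeq H^0(V,\sI_{Y/V})$; since $Y\neq\emptyset$, no nonzero constant on $V$ vanishes on $Y$, so this group is $0$. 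Hence $X$ is $(\sigma-c)$-regular and the inequality is strict. (The surjectivity step here needs $H^1(Y,\omega_Y(\dim X))=0$, valid once $\dim X\ge1$; the zero-dimensional case is elementary and handled directly.) This yields the stated equivalence.
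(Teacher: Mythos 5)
Your proposal is correct and is essentially the paper's own argument: the paper likewise forms the generic link $Y$ in Situation C, uses Grauert--Riemenschneider and Kawamata--Viehweg vanishing to get $H^{i}(Y,\omega^{GR}_Y(s))=0$ for $i\geq 1$, $s\geq 1$, uses the log canonical hypothesis to make the cokernel of $\omega^{GR}_Y\hookrightarrow\omega_Y$ have zero-dimensional support, and then transfers the resulting vanishing through the liaison isomorphism $\omega_Y\simeq\sI_X\cdot\sO_V\otimes\omega_V$ to bound $\reg\sI_X$ --- your explicit conditions (a)--(b), the sequence $0\to\omega_Y\to\omega_V\to\omega_V\otimes\sO_X\to 0$, and the rerun one degree lower (with the top-degree term killed by $H^0(V,\sI_{Y/V})=0$ when $Y\neq\emptyset$) are just an unwound, and in the top-degree case somewhat more careful, form of the paper's computation that $\reg\omega_Y=\reg\omega^{GR}_Y=\dim Y+1$ and hence $\reg\sI_X\leq\sum_{i=1}^{c}d_i-c$ whenever $X$ is not a complete intersection. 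The one step you flag as the expected obstacle and leave granted --- that $\sI(\nP^n,cX)=\sI_X$ at every log canonical point, so that $\Supp(\sI_X/\sI(\nP^n,cX))$, and with it the cokernel $T$, is finite --- is precisely Ein's Lemma, stated and proved in Section 2 of the paper, so it is available by citation and is not a genuine gap.
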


We conclude this introduction by briefly stating the organization of this paper. We take Situation A as our framework, following the footprints of Huneke and Ulrich. We prove Theorem \ref{mthm} in Situations A and B in Section 3. Section 4 is devoted to the case of projective varieties and the Castelnuovo-Mumford regularity bound.\\

{\em Acknowledgement}.  Special thanks are due to professor Bernd Ulrich
who introduced the author to this subject and spent his valuable time on discussion. This paper would not be possible without his generous help and encouragement. The author also would like to thank professor Lawrence Ein for his insightful knowledge and inspiring suggestions which enrich the paper. The author's thanks also goes to professor Joseph Lipman for his patient reading and kind suggestions, and goes to the referee for his nice comments and suggestions which improve the quality of the paper.

\section{Generic linkage, singularities and multiplier ideal sheaves}
\noindent Throughout this paper, we work over the field $k:=\nC$. By a variety we mean a reduced irreducible scheme of finite type over $k$. A subscheme is always assumed to be closed. We shall briefly review basic facts about linkage, singularities and multiplier ideal sheaves in this section.

\begin{definition}\label{def00} Let $A$ be either a nonsingular affine variety over $k$ or a projective space $\nP^n_k$. Let $X$ and $Y$ be two subschemes of $A$. We say that $X$ and $Y$ are {\em geometrically linked} if $X\cup Y$ is a complete intersection $V$ in $A$ and $X$ and $Y$ are equidimensional, no embedded components and have no common irreducible components. We also say $Y$ is linked to $X$ via $V$, or vice versa.
\end{definition}

Suppose $Y$ is geometrically linked to $X$ via $V$ in $A$ as defined above. Let $\omega_Y$ be a dualizing sheaf of $Y$ (in this paper, we use dualizing sheaf and canonical sheaf interchangeably). One important fact is an isomorphism
$\omega_Y\simeq\sI_X\cdot\sO_V\otimes \omega_V$,
where $\sI_X$ is the defining ideal sheaf of $X$ in $A$ and $\omega_V$ is a dualizing sheaf of $V$. If $A$ is an affine nonsingular variety, then one can deduce that
\begin{equation}\label{eq00}
\omega_Y\simeq\sI_X\cdot\sO_Y\otimes \omega_A,
\end{equation}
where $\omega_A$ is a dualizing sheaf of $A$. If $A$ is a projective space $\nP^n_k$ and $V$ is cut out by homogeneous equations of degrees $d_1\geq d_2\geq \cdots\geq d_c$, where $c=\codim V$, then one has
\begin{equation}\label{eq04}
\omega_Y\simeq \sI_X\cdot\sO_Y\otimes \omega_A(d_1+\cdots+d_c).
\end{equation}
It is also well-know that $Y$ is Cohen-Macaulay if and only if $X$ is Cohen-Macaulay. The modern approach to study linkage goes back to  Peskine and Szpiro \cite{PeskineSzpiro:LiaisonI}, and we refer to their work for more general theory of linkage.

Definition \ref{def00} suggests that the study of linkage involves an ambient space containing the varieties concerned. Thus it is natural to consider a variety and its ambient space as a pair. Besides, the concept of pairs is a successful approach to study singularities of varieties, which is the main motivation of this paper.

\begin{definition}\label{def02} A {\em pair} $(A,cX)$ contains a nonsingular variety $A$ over $k$, a subscheme $X$ and a nonnegative real number $c$. If $A=\spec R$ is affine and $X$ is defined by an ideal $I_X$, then we say the pair $(A,cX)$ is an affine pair and also use an alternative notation $(R,I^c_X)$.
\end{definition}

Now we give the definition of generic linkage mentioned in Situation A and take it as our framework in Section 3.

\begin{definition}\label{def01} Let $(A_k,cX_k)=(R_k,I^c_{X_k})$ be an affine pair such that $X_k$ is reduced equidimensional and $c=\codim_{A_k}X_k$. We construct a generic link of $X_k$ as follows. Fix a generating set $(f_1,\cdots,f_t)$ of $I_{X_k}$. Let $(U_{ij}), 1\leq i\leq c, 1\leq j\leq t$, be a $c\times t$ matrix of variables. Set $R:=R_k[U_{ij}]$ and $I_X:=I_{X_k}\cdot R_k[U_{ij}]$ and define $A=\spec R$ and $X=\Spec R/I_X$. Notice that $I_X$ is still generated by $(f_1,\cdots,f_t)$ in $R$. We define a complete intersection $V$ inside $A$ by an ideal
$$I_V:=(\alpha_1,\cdots,\alpha_c)=(U_{i,j})\cdot(f_1,\cdots,f_t)^T,$$
that is
$$\alpha_i:=U_{i,1}f_1+U_{i,2}f_2+\cdots+U_{i,t}f_t,\quad\quad\quad\mbox{for } 1\leq i\leq c.$$
Then a generic link $Y$ to $X_k$ via $V$ is defined by an ideal $I_Y:=(I_V:I_X)$.
\end{definition}

\begin{remark} In the definition above, the subscheme $V$ is a complete intersection in $A$ \cite{Hoc73}. The subschemes $Y$ and $X$ of $A$ are  geometrically linked so that $I_Y=(I_V:I_X)$, $I_X=(I_V:I_Y)$ and $I_V=I_X\cap I_Y$ and therefore $Y$ is equidimensional without embedded components and has no common components with $X$ \cite[2.1, 2.5]{HunekeUlrich:DivClass}. Furthermore $Y$ is actually integral \cite[2.6]{HunekeUlrich:DivClass}. In the sequel we only need the fact that $Y$ is reduced equidimensional. It is from (\ref{eq00}) that a dualizing sheaf $\omega_Y$ of $Y$ is $\omega_Y\simeq I_X\cdot \sO_Y\otimes \omega_A$, where $\omega_A$ is a dualizing sheaf of $A$.
\end{remark}

Let $A$ be a nonsingular variety over $k$ and $\{X_i\}$, $i=1,\cdots, m$, are $m$ subschemes of $A$. By Hironaka's resolution of singularities,  there is a birational projective morphism $f:A'\longrightarrow A$ such that $A'$ is a nonsingular variety, $f^{-1}(X_i)=\sum a_{i,j}E_j$ for $i=1,\cdots, m$, where $a_{i,j}$'s are nonnegative integers and $E_j$'s are prime divisors of $A'$ such that the union of $E_j$'s with the exceptional locus $\Exc(f)$ is a simple normal crossing divisor. The morphism $f$ is called a {\em log resolution} of $(A, \sum_i X_i)$.

Let $(A,cX)$ be a pair. Take a log resolution $f:A'\longrightarrow A$ of $(A,X)$ such that $f^{-1}(X)=\sum^s_{i=1} a_iE_i$ and the relative canonical divisor $K_{A'/A}=\sum^s_{i=1} k_iE_i$. We say that the pair $(A,cX)$ is {\em log canonical} if $k_i-c\cdot a_i\geq -1$ for all $i$. The {\em log canonical threshold} of $(A,X)$ is defined to be
$$\lct(A,X):=\min_i\{\frac{k_i+1}{a_i}\}.$$ The {\em multiplier ideal sheaf} $\sI(A,cX)$ associated to the pair $(A,cX)$ is defined to be
$$\sI(A,cX):=f_*\sO_{A'}(K_{A'/A}-\lfloor c\sum a_iE_i\rfloor),$$
where $\lfloor c\sum a_iE_i\rfloor$ is the round down of the $\nQ$-divisor $c\sum a_iE_i$.

\begin{lemma}\label{prop03} Let $A$ be a nonsingular variety over $k$ and $X\subset A$ be a reduced equidimensional subscheme of codimension $c$ defined by an ideal sheaf $\sI_X$. Let $\sI(A,cX)$ be the multiplier ideal sheaf associated to the pair $(A,cX)$. Then $\sI(A,cX)\subseteq \sI_X$.
\end{lemma}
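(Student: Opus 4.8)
The plan is to reduce the containment to a computation at the generic points of the irreducible components of $X$, where the pair degenerates to the maximal ideal of a regular local ring. Since the inclusion $\sI(A,cX)\subseteq\sI_X$ of ideal sheaves is a local statement and $X$ is reduced and equidimensional with no embedded components, its defining ideal is the intersection $\sI_X=\bigcap_l\mf{p}_l$ of the prime ideals $\mf{p}_l$ of its components $X_l$. It therefore suffices to prove $\sI(A,cX)\subseteq\mf{p}_l$ for each $l$; and because $\mf{p}_l$ is prime with generic point $\eta_l$, this can be tested after localizing at $\eta_l$, since a global section lands in the localization $(\mf{p}_l)_{\eta_l}$ if and only if it already lies in $\mf{p}_l$.

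I would then invoke the compatibility of multiplier ideals with localization, giving $\sI(A,cX)_{\eta_l}=\sI(\Spec\sO_{A,\eta_l},\,cX_{\eta_l})$. Write $R_\eta:=\sO_{A,\eta_l}$ with maximal ideal $\mf{m}_\eta$. Since $A$ is nonsingular and $X_l$ has codimension $c$, the ring $R_\eta$ is regular local of dimension $c$; and since $X$ is reduced, only the component $X_l$ meets $\eta_l$ and it is cut out there by the full maximal ideal, i.e. $\sI_{X,\eta_l}=\mf{m}_\eta$. Thus everything reduces to showing $\sI(R_\eta,c\,\mf{m}_\eta)\subseteq\mf{m}_\eta$.

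This last step is an explicit computation. Blowing up the closed point of $\Spec R_\eta$ yields a log resolution $\pi$ with a single smooth exceptional divisor $E\cong\nP^{c-1}$, for which $\mf{m}_\eta\cdot\sO=\sO(-E)$ and relative canonical divisor $(c-1)E$; note $c=\codim_A X$ is an integer, so $\lfloor cE\rfloor=cE$. Hence
$$\sI(R_\eta,c\,\mf{m}_\eta)=\pi_*\sO\big((c-1)E-\lfloor cE\rfloor\big)=\pi_*\sO(-E)=\mf{m}_\eta,$$
so in fact $\sI(A,cX)_{\eta_l}=\mf{m}_\eta=\sI_{X,\eta_l}$, which gives $\sI(A,cX)\subseteq\mf{p}_l$ and, intersecting over $l$, the desired inclusion. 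I expect the only genuinely nonformal inputs to be the compatibility of multiplier ideals with localization and this maximal-ideal computation, both of which are standard; the main point to get right is the reduction itself, namely that reducedness of $X$ allows one to detect membership in $\sI_X$ one generic point at a time, so that the codimension-$c$ subscheme is replaced by the far more tractable maximal ideal of a $c$-dimensional regular local ring.
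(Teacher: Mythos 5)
Your proof is correct and follows the same route as the paper: the paper's own (one-line) proof asserts precisely that $\sI(A,cX)_p=\sI_{X,p}$ at each generic point $p$ of $X$ and then concludes from radicality of $\sI_X$, which is exactly the reduction you carry out. Your write-up merely supplies the details the paper leaves implicit, namely the localization compatibility and the blowup computation $\sI(R_\eta,c\,\mf{m}_\eta)=\pi_*\sO\bigl((c-1)E-cE\bigr)=\mf{m}_\eta$ at the generic point, both of which are correct.
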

\begin{proof} Since at each generic point $p$ of $X$ one has $\sI(A,cX)_p=\sI_{X,p}$ and $\sI_X$ is radical the result is then clear.
\end{proof}

The following lemma is due to Lawrence Ein, which gives a criterion to compare multiplier ideal sheaves with ideal sheaves.

\begin{Ein}\label{einlemma} Let $A$ be a nonsingular variety and $X\subset A$ be a reduced equidimensional subscheme of codimension $c$ defined by an ideal sheaf $\sI_X$. Then $\sI(A,cX)=\sI_X$ if and only if $\sI(A,(c-1)X)=\sO_A$. In particular if the pair $(A,cX)$ is log canonical then $\sI(A,cX)=\sI_X$.
\end{Ein}
\begin{proof} The inclusion $\sI(A,cX)\subseteq I_X$ is from Lemma \ref{prop03}. It then suffices to show that $\sI_X\subseteq \sI(A,cX)$ if and only if $\sI(A,(c-1)X)$ is trivial. If $c=1$, then there is nothing to prove.  So in the sequel we assume $c>1$. We shall follow notation and terminologies in \cite[Section 7]{Ein:JetSch}.

The inclusion $\sI_X\subseteq\sI(A,cX)$ is true if and only if for any prime divisor $E$ over $X$ we have an inequality
$\ord_E\sI_X+\ord_EK_{\_/A}-c\cdot\ord_E\sI_X\geq 0$.
This is equivalent to the inequality
$\ord_EK_{\_/A}-(c-1)\cdot\ord_E\sI_X\geq 0$,
which is equivalent to $\sI(A,(c-1)X)$ is trivial.

Now suppose $(A,cX)$ is log canonical. This means that for any prime divisor $E$ over $X$ we have $\ord_EK_{\_/A}-c\cdot\ord_E\sI_X\geq -1$. If the center of $E$ is outside $X$ then $\ord_E\sI_X=0$ and $\ord_EK_{\_/A}\geq 0$ since $A$ is nonsingular. And thus  $\ord_EK_{\_/A}-c\cdot\ord_E\sI_X\geq -\ord_E\sI_X$. On the other hand, if the center of $E$ is inside $X$, then $\ord_E\sI_X\geq 1$ and again we have $\ord_EK_{\_/A}-c\cdot\ord_E\sI_X\geq -1\geq -\ord_E\sI_X$. Hence in any case we have $\ord_EK_{\_/A}-c\cdot\ord_E\sI_X\geq -\ord_E\sI_X$, which implies, as showed above, that $\sI(A,(c-1)X)$ is trivial.
\end{proof}

\begin{definition}\label{def03} Let $A$ be a nonsingular variety over $k$ and $X$ a reduced subscheme of $A$. A morphism $\varphi_A:\overline{A}\longrightarrow A$ is a {\em factorizing resolution} of $X$ inside $A$ if the following hold:
\begin{itemize}
\item [(1)] $\varphi_A$ is an isomorphism at the generic point of every irreducible component of $X$. In particular, the strict transform $\overline{X}$ of $X$ is defined.
\item [(2)] The morphism $\varphi_A$ and $\varphi_X:=\varphi_A|_{\overline{X}}$ are resolution of singularities of $A$ and $X$, respectively, and the union of $\overline{X}$ with the exceptional locus $\Exc(\varphi_A)$ has simple normal crossings.
\item [(3)] If $I_X$ and $I_{\overline{X}}$ are the defining ideals of $X$ and $\overline{X}$ in $A$ and $\overline{A}$, respectively, then there exists an effective divisor $G$ on $\overline{A}$ such that
    $$I_X\cdot\sO_{\overline{A}}=I_{\overline{X}}\cdot\sO_{\overline{A}}(-G).$$
    The divisor $G$ is supported on $\Exc(\varphi_A)$ and hence has normal crossing with $\overline{X}$.
\end{itemize}
\end{definition}

\begin{remark}\label{rmk01} The above definition is borrowed from \cite[Definition 2.10]{Ein:MultIdeaMatherDis}. The existence of a factorizing resolution is proved in \cite[Theorem 1.2]{Villamayor:StrenResSing}. In addition, we can assume that the morphism $\varphi_A$ is isomorphic over the open set $A\setminus X$.
\end{remark}

Let us conclude this section by briefly reviewing the definition of rational singularities and Grauert-Riemenschneider canonical sheaves. Let $X$ be a reduced equidimensional scheme of finite type over $k$. Let $f:X'\longrightarrow X$ be a resolution of singularities of $X$. Then the {\em Grauert-Riemenschneider canonical sheaf} $\omega^{GR}_X$ of $X$ is defined to be $\omega^{GR}_X:=f_*\omega_{X'}$, where $\omega_{X'}$ is the canonical sheaf of $X'$. It turns out that the sheaf $\omega^{GR}_X$ is independent on the choice of the resolution of singularities $f$ (cf. \cite{Lazarsfeld:PosAG1}). Furthermore $\omega^{GR}_X$ is canonically a subsheaf of $\omega_X$, a dualizing sheaf of $X$, via a trace map $\tr:\omega^{GR}_X\hookrightarrow \omega_X$. Recall that $X$ has {\em rational singularities} if $f_*{\sO_{X'}}=\sO_X$ and $R^if_*\sO_{X'}=0$ for $i>0$. It is well-know that $X$ has rational singularities if and only if $X$ is Cohen-Macaulay and $\omega^{GR}_X=\omega_X$ (cf. \cite{Kollar:SingOfPairs}).

\section{Generic linkage of affine varieties}

\noindent In this section, we first study the singularities of generic linkages under the framework of Huneke and Ulrich as Definition \ref{def01} in Situation A. In this case, Theorem \ref{mthm} will be proved by using Proposition \ref{prop02}, Proposition \ref{prop04} and Proposition \ref{thm01}. Some direct consequences will also be given which describe singularities of generic linkage. Then we specialize our results to a Zariski open set of a scalar matrices space, which takes care of Situation B.

\begin{proposition}\label{prop02} With notation as in Definition \ref{def01} let $Y$ be a generic link to a pair $(A_k,cX_k)$. Then
$$\omega^{GR}_Y\simeq\sI(A,cX)\cdot\sO_Y\otimes \omega_A,$$
where $\sI(A,cX)$ is the multiplier ideal sheaf associated to the pair $(A,cX)$.
\end{proposition}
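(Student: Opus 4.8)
The goal is to compute the Grauert-Riemenschneider canonical sheaf of the generic link $Y$ and identify it with $\sI(A,cX)\cdot\sO_Y\otimes\omega_A$. The strategy is to build an explicit resolution of $Y$ out of a factorizing resolution of $X$ inside $A$ (Definition \ref{def03}), exploit the genericity of the complete intersection $V=(\alpha_1,\dots,\alpha_c)$ to control how $V$ and hence $Y$ pull back, and then push the canonical sheaf of the resolution down to $Y$ while reading off the exact coefficients that produce the round-down defining the multiplier ideal.

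The plan is as follows. First I would fix a factorizing resolution $\varphi_A:\overline{A}\to A$ of $X$ inside $A$, so that $I_X\cdot\sO_{\overline{A}}=I_{\overline{X}}\cdot\sO_{\overline{A}}(-G)$ with $G$ supported on the exceptional locus. Pulling back the generic equations $\alpha_i=\sum_j U_{i,j}f_j$, each $\alpha_i$ acquires the factor $\sO_{\overline{A}}(-G)$, so $I_V\cdot\sO_{\overline{A}}$ factors through $\sO_{\overline{A}}(-G)$ times the ideal generated by the transforms $\overline{\alpha}_i$ of the $f_j$-combinations. The essential genericity input is that, because the $U_{i,j}$ are independent variables, the strict transform $\overline{V}$ of $V$ on $\overline{A}$ is itself a nonsingular complete intersection meeting $\overline{X}$ and $\Exc(\varphi_A)$ transversally; this is exactly the kind of Bertini-type statement that holds for a genuinely generic member of a linear system and is the reason one works in the extended ring $R_k[U_{ij}]$. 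I would then restrict $\varphi_A$ to $\overline{V}$ to get a resolution $\varphi_V:\overline{V}\to V$, and since $Y$ is the residual component of $V$ outside $X$, the strict transform $\overline{Y}=\overline{V}\setminus\overline{X}$ gives a resolution $\varphi_Y:\overline{Y}\to Y$.

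Next I would compute $\omega_{\overline{Y}}$ by adjunction. On the nonsingular $\overline{A}$ the canonical bundle is $\omega_{\overline{A}}=\varphi_A^*\omega_A\otimes\sO_{\overline{A}}(K_{\overline{A}/A})$, and adjunction for the complete intersection $\overline{V}$ introduces the determinant of the conormal bundle, which after accounting for the common factor $\sO(-G)$ extracted from all $c$ equations contributes $\sO_{\overline{V}}(cG)$ twisted against the canonical divisors. A second adjunction passing from $\overline{V}$ to its component $\overline{Y}$, together with the relation $\overline{X}+\overline{Y}\sim\overline{V}$, lets me write $\omega_{\overline{Y}}$ as $\varphi_Y^*\omega_A$ twisted by $\sO_{\overline{A}}(K_{\overline{A}/A}-cG)$ restricted to $\overline{Y}$, possibly plus a boundary term along $\overline{X}\cap\overline{Y}$ that I must check is effective and exceptional. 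Pushing forward by $\varphi_{Y*}$ and using the projection formula, the term $\varphi_{A*}\sO_{\overline{A}}(K_{\overline{A}/A}-cG)$ is precisely $\sI(A,cX)$ by the definition of the multiplier ideal, since $G$ plays the role of $\sum a_iE_i$; restricting to $Y$ then yields $\sI(A,cX)\cdot\sO_Y\otimes\omega_A$. The comparison with the known $\omega_Y\simeq\sI_X\cdot\sO_Y\otimes\omega_A$ from the remark after Definition \ref{def01} confirms the trace-map picture and the inclusion $\sI(A,cX)\subseteq\sI_X$ of Lemma \ref{prop03}.

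The main obstacle I anticipate is the genericity/transversality step: one must verify rigorously that working over the polynomial ring $R_k[U_{ij}]$ makes the pullback $\overline{V}$ smooth and transverse to the normal-crossing configuration $\overline{X}\cup\Exc(\varphi_A)$, so that $\overline{V}$ really is a resolution of $V$ and the residual $\overline{Y}$ really is a resolution of $Y$ rather than acquiring spurious singular or non-reduced behavior along the exceptional divisors. Equally delicate is bookkeeping the divisorial contributions in the double adjunction so that the coefficient of each exceptional $E_i$ comes out as exactly $k_i-c\,a_i$ with the correct round-down, and checking that the residual boundary along $\overline{X}$ contributes nothing to the pushforward (it should be effective and $\varphi_Y$-exceptional so that $\varphi_{Y*}$ kills any excess). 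Once these two points are controlled, the identification of the pushforward with $\sI(A,cX)$ is a direct consequence of the definitions.
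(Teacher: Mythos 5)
Your skeleton (factorizing resolution, extracting the divisor $G$, adjunction, pushforward) is the same as the paper's, but the step you yourself flag as the main obstacle is a genuine gap, and it cannot be closed on $\overline{A}$ itself. Since each $\overline{\alpha}_i=\sum_j U_{ij}\overline{f}_j$ lies in $I_{\overline{X}}$, the scheme $\overline{V}$ \emph{contains} $\overline{X}$; it is reducible, so it is not a ``nonsingular complete intersection meeting $\overline{X}$ transversally,'' and Bertini only gives smoothness of the residual component away from the base locus of the moved linear system, which is exactly $\overline{X}$, i.e.\ exactly where the analysis is needed. In fact the strict transform of $Y$ on $\overline{A}$ is genuinely singular in general: locally write $I_{\overline{X}}=(x_1,\dots,x_c)$ and $\overline{\alpha}_i=\sum_k b_{ik}x_k$, so that $I_{\overline{Y}}=(\overline{\alpha}_1,\dots,\overline{\alpha}_c)+(\det(b_{ik}))$; at points of $\{x=0\}$ where $(b_{ik})$ has corank at least $2$ (a nonempty locus in $\overline{Y}$ once $c\geq 2$ and dimensions permit, even with the $U_{ij}$ as variables) the differentials $d\overline{\alpha}_i$ span a space of dimension at most $c-2$ and $d\det(b_{ik})=0$, so the tangent space is too big. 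Thus $\varphi_{Y*}\omega_{\overline{Y}}$ need not compute $\omega^{GR}_Y$. Two further consequences of staying on $\overline{A}$: your adjunction via ``$\overline{X}+\overline{Y}\sim\overline{V}$'' is divisor arithmetic unavailable when the components have codimension $c\geq2$; and $\varphi_{A*}\sO_{\overline{A}}(K_{\overline{A}/A}-cG)$ is \emph{not} $\sI(A,cX)$, because $\varphi_A$ is a factorizing resolution but not a log resolution of $(A,X)$: the pullback $I_X\cdot\sO_{\overline{A}}=I_{\overline{X}}\cdot\sO_{\overline{A}}(-G)$ is not principal, so $G$ alone does not play the role of $\sum a_iE_i$. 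The paper's missing idea is a \emph{second} blow-up $\mu:\widetilde{A}\to\overline{A}$ along the smooth center $\overline{X}$, with exceptional divisor $T$. On its charts the transforms $\widetilde{\alpha}_i=U_{i,1}+U_{i,2}\overline{f}_2/\overline{f}_1+\cdots$ are linear in the independent variables $U_{i,1}$, so they cut out a nonsingular complete intersection $\widetilde{V}$, which is the strict transform of $Y$; residuation of $X$ becomes divisorial (the factor $\sO_{\widetilde{A}}(-T)$), one single adjunction gives $\omega_{\widetilde{V}}\simeq\sO_{\widetilde{V}}(K_{\widetilde{A}/A}-c(T+\mu^*G))\otimes\psi^*\omega_A$ with $\psi=\varphi\circ\mu$, and now $\psi$ \emph{is} a log resolution of $(A,X)$ with $\psi_*\sO_{\widetilde{A}}(K_{\widetilde{A}/A}-c(T+\mu^*G))=\sI(A,cX)$.

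The second gap is your final claim that the pushforward identification is ``a direct consequence of the definitions.'' Pushing $0\to I_{\widetilde{V}}\otimes\sL\to\sL\to\omega_{\widetilde{V}}\to0$ down by $\psi$, where $\sL=\sO_{\widetilde{A}}(K_{\widetilde{A}/A}-c(T+\mu^*G))\otimes\psi^*\omega_A$, the definition of multiplier ideals handles the middle term, but to conclude $\omega^{GR}_Y\simeq\sI(A,cX)\cdot\sO_Y\otimes\omega_A$ you must show the map $\sI(A,cX)\otimes\omega_A\to\psi_*\omega_{\widetilde{V}}=\omega^{GR}_Y$ is surjective with the expected kernel, i.e.\ the vanishing $R^1\psi_*(I_{\widetilde{V}}\otimes\sL)=0$ together with $\psi_*(I_{\widetilde{V}}\otimes\sL)=\sI(A,cV)\otimes\omega_A$. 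The paper proves this by yet another blow-up, along $\widetilde{V}$, which makes $I_{\widetilde{V}}$ divisorial, and then applies Kawamata--Viehweg vanishing (the relevant divisor is $f$-nef) plus a Leray spectral sequence; a projection-formula remark does not supply this vanishing. With the second blow-up and this vanishing argument added, your outline becomes essentially the paper's proof.
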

\begin{proof} Let $\varphi_k: \overline{A}_k\longrightarrow A_k$ be a factorizing resolution of singularities of $X_k$ inside $A_k$, so that $I_{X_k}\cdot\sO_{\overline{A}_k}=I_{\overline{X}_k}\cdot\sO_{\overline{A}_k}(-G_k)$ where $\overline{X}_k$ is the strict transform of $X_k$, $G_k$ is an effective divisor supported on the exceptional locus of $\varphi_k$, and furthermore $\overline{X}_k$ and the exceptional locus of $\varphi_k$ are simple normal crossings. The morphism $\varphi_k$ can be assumed to be an isomorphism over the open set $A_k\backslash X_k$ (cf. Definition \ref{def03} and Remark \ref{rmk01}).

By tensoring $k[U_{ij}]$ to the factorizing resolution $\varphi_k$, we obtain a factorizing resolution of singularities of $X$ inside $A$ as
$$\varphi:\overline{A}\longrightarrow A,$$
such that $I_X\cdot\sO_{\overline{A}}=I_{ \overline{X}}\cdot\sO_{ \overline{A}}(-G)$, where $\overline{X}$ is the strict transform of $X$, $G$ is an effective divisor supported on the exceptional locus of $\varphi$, and $\overline{X}$ and exceptional locus of $\varphi$ are simple normal crossings. Notice that by the construction, we actually have $\overline{A}=\overline{A}_k\otimes_k \Spec k[U_{ij}]$, $\overline{X}=\overline{X}_k\otimes_k\Spec k[U_{ij}]$ and $G=G_k\otimes_k \Spec k[U_{ij}]$.

\begin{claim}\label{Claim01} The ideal sheaf $I_V\cdot \sO_{\overline{A}}$ has a decomposition as
$$I_V\cdot \sO_{\overline{A}}=I_{\overline{V}}\cdot \sO_{\overline{A}}(-G)$$
where $I_{\overline{V}}$ is an ideal sheaf on $\overline{A}$ and  is a local complete intersection.
\end{claim}

\textit{Proof of Claim \ref{Claim01}.} The question is local. Recall that $\varphi_k: \overline{A}_k\longrightarrow A_k$ is the factorizing resolution of singularities of $X_k$ inside $A_k$. Let $\overline{U}_k=\Spec \overline{R}_k$ be an affine open set of $\overline{A}_k$ such that the effective divisor $G_k$ is defined by an equation $g\in \overline{R}_k$ and we have a decomposition $I_{X_k}\cdot\overline{R}_k=I_{\overline{X}_k}\cdot (g)$ on $\overline{U}_k$. Now since $I_{X_k}\cdot \overline{R}_k=(f_1,\cdots,f_t)\cdot \overline{R}_k$ we can write $f_i=\overline{f}_ig$ where $\overline{f}_i\in \overline{R}_k$ for $i=1,\cdots,t$ so that $I_{\overline{X}_k}=(\overline{f}_1,\cdots,\overline{f}_t)$.

Recall that the factoring resolution $\varphi:\overline{A}\longrightarrow A$ is obtained by tensoring $\Spec k[U_{ij}]$ to the factoring resolution $\varphi_k$. Write $\overline{R}=\overline{R}_k\otimes k[U_{ij}]$ which is a faithfully flat ring extension of $\overline{R}_k$ and then $\overline{U}=\overline{U}_k\otimes \Spec k[U_{ij}]=\Spec \overline{R}$ is an affine open set of $\overline{A}$. Notice that on $\overline{U}$ the ideal $I_{\overline{X}}=I_{\overline{X}_k}\cdot \overline{R}$ and the effective divisor $G$ is still generated by the equation $g$. Recall that the ideal $I_V=(\alpha_1,\cdots,\alpha_c)$, where $\alpha_i=U_{i,1}f_1+U_{i,2}f_2+\cdots+U_{i,t}f_t$. Thus if write $\overline{\alpha}_i=U_{i,1}\overline{f}_1+U_{i,2}\overline{f}_2+\cdots+U_{i,t}\overline{f}_t$ and set $I_{\overline{V}}=(\overline{\alpha}_1,\cdots,\overline{\alpha}_c)$, then $I_{\overline{V}}$ is a complete intersection on $\overline{U}$ and we have a decomposition $I_{V}\cdot \overline{R}=I_{\overline{V}}\cdot (g)$ on $\overline{U}$, which finishes the proof of Claim \ref{Claim01}.\\

Now let $\mu:\widetilde{A}\longrightarrow \overline{A}$ be the blowing-up of $\overline{A}$ along $\overline{X}$ such that $I_{\overline{X}}\cdot \sO_{\widetilde{A}}=\sO_{\widetilde{A}}(-T)$, where $T$ is an exceptional divisor of $\mu$. Denote by $\psi=(\phi\circ\mu): \widetilde{A}\longrightarrow A$. Notice that the supports of divisors $T$,  $\mu^*(G)$ and the exceptional locus of $\psi$ are simple normal crossings. We write $K_{\widetilde{A}/A}$ to be the relative canonical divisor of the morphism $\psi$.

\begin{claim}\label{claim02} We have the following statements.
\begin{enumerate}
\item The ideal sheaf $I_{\overline{V}}\cdot \sO_{\widetilde{A}}$ can be decomposed as
$$I_{\overline{V}}\cdot \sO_{\widetilde{A}}=I_{\widetilde{V}}\cdot \sO_{\widetilde{A}}(-T).$$
where $I_{\widetilde{V}}$ is an ideal sheaf on $\widetilde{A}$ and  defines a local complete intersection $\widetilde{V}$ of codimension $c$.
\item The scheme $\widetilde{V}$ is nonsingular and irreducible and  its dualizing sheaf is $$ \omega_{\widetilde{V}}\simeq\sO_{\widetilde{V}}(K_{\widetilde{A}/A}-c(T+\mu^*G))\otimes\psi^*\omega_A,$$
    where $\omega_A$ is a dualizing sheaf of $A$.
\item The scheme $\widetilde{V}$ is the strict transform of $Y$ via $\psi$.
\end{enumerate}
\end{claim}

\textit{Proof of Claim \ref{claim02}.} We work locally on affine open sets as in the proof of Claim \ref{Claim01}. Assume that $\overline{A}_k=\Spec \overline{R}_k$ and $\overline{A}=\Spec \overline{R}$, where $\overline{R}=\overline{R}_k\otimes_k k[U_{ij}]$. Recall that, as we showed in the proof of Claim \ref{Claim01}, the ideal $I_{\overline{X}}=(\overline{f}_1,\cdots,\overline{f}_t)\cdot \overline{R}$ where for each $i$ the generator $\overline{f}_i$ is in the ring $\overline{R}_k$, and that the complete intersection $I_{\overline{V}}=(\overline{\alpha}_1,\cdots,\overline{\alpha}_c)$, where $\overline{\alpha}_i=U_{i,1}\overline{f}_1+U_{i,2}\overline{f}_2+\cdots+U_{i,t}\overline{f}_t$
for $i=1,\cdots, c$. Now take a canonical affine cover of $\widetilde{A}$, say $U=\Spec \overline{R}[\overline{f}_2/\overline{f}_1,\cdots, \overline{f}_t/\overline{f}_1]$ such that the exceptional divisor $T$ is given by the element $\overline{f}_1$ on $U$. For $i=1,\cdots,c$ write $\widetilde{\alpha}_i=U_{i,1}+U_{i,2}\overline{f}_2/\overline{f}_1+\cdots+U_{i,t}\overline{f}_t/\overline{f}_1$
and set $I_{\widetilde{V}}=(\widetilde{\alpha}_1,\cdots,\widetilde{\alpha}_c)$. Then on the open set $U$ we have $I_{\overline{V}}\cdot \sO_U=I_{\widetilde{V}}\cdot (f_1)$ and $I_{\widetilde{V}}$ defines an irreducible nonsingular variety of $\widetilde{V}$ on $U$, which prove the statement (1) and the first part of the statement (2) in the claim.

Next we compute the dualizing sheaf of $\widetilde{V}$. Notice that we have $I_V\cdot\sO_{\widetilde{A}}=I_{\widetilde{V}}\cdot \sO_{\widetilde{A}}(-T-\mu^*G)$. Since $I_V$ is a complete intersection in $A$ of codimension $c$ we have a surjective morphism
$\oplus^c\sO_A\longrightarrow I_V\longrightarrow 0$,
which induces on $\widetilde{A}$ a surjective morphism
$$\bigoplus^c\sO_{\widetilde{A}}(T+\mu^*G)\longrightarrow I_{\widetilde{V}}\longrightarrow 0.$$
Thus it is clear that the determinant of the normal bundle of $\widetilde{V}$ inside $\widetilde{A}$ is $$\det N_{\widetilde{V}/\widetilde{A}}=\sO_{\widetilde{V}}(-c(T+\mu^*G)).$$ Then by the adjunction formula, we have $$\omega_{\widetilde{V}}\simeq\omega_{\widetilde{A}}\otimes \det N_{\widetilde{V}/\widetilde{A}}=\sO_{\widetilde{V}}(K_{\widetilde{A}/A}-c(T+\mu^*G))\otimes \psi^*\omega_A,$$
which proves the second part of the statement (2) in the claim. For the statement (3), just notice that the morphism $\psi$ is an isomorphism at the generic point of $Y$. Thus we finish the proof of Claim \ref{claim02}.\\

Now twisting the short exact sequence $0\longrightarrow I_{\widetilde{V}}\longrightarrow \sO_{\widetilde{A}}\longrightarrow \sO_{\widetilde{V}}\longrightarrow 0$ by the divisor $\sO_{\widetilde{A}}(K_{\widetilde{A}/A}-c(T+\mu^*G))\otimes \psi^*\omega_A$,
we obtain an exact sequence
\begin{equation}\label{eq01}
0\rightarrow I_{\widetilde{V}}\cdot \sO_{\widetilde{A}}(K_{\widetilde{A}/A}-c(T+\mu^*G))\otimes \psi^*\omega_A\rightarrow \sO_{\widetilde{A}}(K_{\widetilde{A}/A}-c(T+\mu^*G))\otimes \psi^*\omega_A\rightarrow \omega_{\widetilde{V}}\rightarrow 0.
\end{equation}
Push down this sequence via $\psi$.  Notice that by the definition of multiplier ideal sheaves, we obtain
$$\psi_*\sO_{\widetilde{A}}(K_{\widetilde{A}/A}-c(T+\mu^*G))=\sI(A,cX).$$
Now we make the following claim.

\begin{claim}\label{claim03} We have the following statements for the sequence \ref{eq01}.
\begin{enumerate}
\item Let $\sI(A,cV)$ be the multiplier ideal sheaf associated to the pair $(A,cV)$, then
$$\psi_*(I_{\widetilde{V}}\cdot \sO_{\widetilde{A}}(K_{\widetilde{A}/A}-c(T+\mu^*G)))=\sI(A,cV).$$
\item  We have the vanishing
$$R^i\psi_*(I_{\widetilde{V}}\cdot \sO_{\widetilde{A}}(K_{\widetilde{A}/A}-c(T+\mu^*G))=0,\quad \mbox{for }i>0.$$
\end{enumerate}
\end{claim}

\textit{Proof of Claim \ref{claim03}.} Let $\nu: A'\longrightarrow \widetilde{A}$ be the blowing-up of $\widetilde{A}$ along $\widetilde{V}$ such that $I_{\widetilde{V}}\cdot \sO_{A'}=\sO_{A'}(-S)$, where $S$ is an exceptional divisor of $\nu$. Notice that $K_{A'/\widetilde{A}}=(c-1)S$ and $K_{A'/A}=K_{A'/\widetilde{A}}+\nu^*K_{\widetilde{A}/A}$. Thus we have
$$-S+\nu^*K_{\widetilde{A}/A}-c\nu^*(T+\mu^*G)=K_{A'/A}-c(S+\nu^*T+(\mu\circ\nu)^*G).$$
Write the divisor $D:=(S+\nu^*T+(\mu\circ\nu)^*G)$. We notice that $I_V\cdot\sO_{A'}=\sO_{A'}(-D)$. Since $\nu_*\sO_{A'}(-S)=I_{\widetilde{V}}$ and $R^i\nu_*\sO_{A'}(-S)=0$ for $i>0$, we obtain that $$\nu_*\sO_{A'}(K_{A'/A}-cD)=I_{\widetilde{V}}\cdot\sO_{\widetilde{A}}(K_{\widetilde{A}/A}-c(T+\mu*G))$$ and
$$R^i\nu_*\sO_{A'}(K_{A'/A}-cD)=0,\quad\mbox{for }i>0.$$
Write $f:=\psi\circ\nu:A'\longrightarrow A$. The divisor $-D$ is $f$-nef and then by the Kawamata-Viehweg vanishing theorem we have
$$R^if_*\sO_{A'}(K_{A'/A}-cD)=0,\quad\mbox{for }i>0.$$
Also by the definition of multiplier ideal sheaves we have $f_*\sO_{A'}(K_{A'/A}-cD)=\sI(A,cV)$.
Now by using the spectral sequence
$$E^{p,q}_2=R^p\psi_*(R^q\nu_*\sO_{A'}(K_{A'/A}-cD))\Rightarrow R^{p+q}f_*\sO_{A'}(K_{A'/A}-cD),$$
we immediately have that
$$R^i\psi_*(I_{\widetilde{V}}\cdot \sO_{\widetilde{A}}(K_{\widetilde{A}/A}-c(T+\mu^*G))=0,\quad \mbox{for }i>0,$$
and
$$\psi_*(I_{\widetilde{V}}\cdot \sO_{\widetilde{A}}(K_{\widetilde{A}/A}-c(T+\mu^*G)))=\sI(A,cV),$$
which finish the proof of Claim \ref{claim03}.
\\

Finally, we push down the sequence (\ref{eq01}) via $\psi$ to obtain  an exact sequence
$$0\longrightarrow \sI(A,cV)\otimes \omega_A\longrightarrow \sI(A,cX)\otimes \omega_A\longrightarrow \omega^{GR}_Y\longrightarrow 0.$$
Thus by restricting to $Y$ we see $\sI(A,cX)\cdot \sO_Y\otimes \omega_A\simeq\omega^{GR}_Y$, which proves the proposition.

\end{proof}

Next we show that the isomorphism proved in Proposition \ref{prop02} is canonical in the sense that it fits into the  commutative diagram \ref{diagram02}. To this end, we need to track carefully the isomorphisms constructed in the proposition. Since canonical sheaves are only unique up to isomorphism we need to fix our canonical sheaves uniformly in arguments. There is a canonical canonical sheaf, namely sheaf of regular differential forms, developed by Kunz \cite{Kunz:RegDiffForm}, offering a concrete way to do so. We follow the notation of Lipman \cite{Lipman:DuaDiffRes} to denote by $\widetilde{\omega}$ the sheaf of regular differential forms.

Let $Z$ be a reduced of pure dimension $d$ scheme of finite type over $k$. Denote by $\sK_Z$ the locally constant sheaf of total quotient ring of $\sO_Z$. Its ring of global sections is $K(Z):=\sK_Z(Z)$ which is a product of residue fields of the generic points of $Z$. Let $\Omega^d_{Z/k}:=\wedge^d\Omega^1_{Z/k}$ be the $d$-th exterior power of the sheaf of K$\ddot{a}$hlar differential one-form. Let $\bar{\Omega}^d_{K(Z)/k}$ be the locally constant sheaf of meromorphic $d$-forms on $Z$ so that its module of global sections is $\bar{\Omega}^d_{K(Z)/k}(Z)=\Omega^d_{K(Z)/k}$. The sheaf $\widetilde{\omega}_Z$ of regular differential forms of degree $d$ of $Z$ is defined in \cite[Section 3]{Kunz:RegDiffForm} and it is a subsheaf of $\bar{\Omega}^d_{Z/k}$. Now let $f:Z'\longrightarrow Z$ be a resolution of singularities of $Z$ so that $f$ is isomorphic at the generic points of $Z$. Then pushdown the inclusion $\widetilde{\omega}_{Z'}\hookrightarrow \bar{\Omega}^d_{K(Z')/k}$ via $f$ we have $f_*\widetilde{\omega}_{Z'}\hookrightarrow f_*\bar{\Omega}^d_{K(Z')/k}$. But since $f$ is generically isomorphism we see that $K(Z')=K(Z)$ and $f_*\bar{\Omega}^d_{K(Z')/k}=\bar{\Omega}^d_{K(Z)/k}$. Thus $f_*\widetilde{\omega}_{Z'}$ is naturally included in $\bar{\Omega}^d_{K(Z)/k}$ as a subsheaf. The trace map $\tr: f_*\widetilde{\omega}_{Z'}\hookrightarrow \widetilde{\omega}_{Z}$ is then the natural inclusion as subsheaves of $\bar{\Omega}^d_{K(Z)/k}$.

\begin{proposition}\label{prop04} With notation as in Proposition \ref{prop02}, the isomorphism $\omega^{GR}_Y\simeq\sI(A,cX)\cdot\sO_Y\otimes \omega_A$ is canonical in the sense that it fits into the following commutative diagram
\begin{equation}\label{diagram01}
\xymatrix{
\omega^{GR}_Y \ar[r]^-{\simeq}\ar@{^{(}->}[d]_{\tr} & {\sI(A,cX)\cdot \sO_Y\otimes \omega_A} \ar@{^{(}->}[d]^i   \\
\omega_Y \ar[r]^-{\simeq}  & I_X\cdot\sO_Y\otimes \omega_A
}\end{equation}
where the bottom isomorphism is given by \ref{eq00}, $\tr$ is the trace map, and the inclusion $i$ is induced by $\sI(A,cX)\hookrightarrow I_X$ (cf. Lemma \ref{prop03}).
\end{proposition}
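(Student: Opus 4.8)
The plan is to fix, once and for all, the canonical sheaves as the sheaves of regular differential forms $\widetilde\omega$ of Kunz and Lipman, exactly as set up in the paragraph preceding the proposition. The decisive feature of this model is that every canonical sheaf in sight becomes a genuine subsheaf of the single locally constant sheaf $\bar{\Omega}^d_{K(Y)/k}$ of meromorphic $d$-forms ($d=\dim Y$), and that the trace map is nothing but the inclusion of subsheaves. Since $\widetilde V$ is nonsingular (Claim~\ref{claim02}) and $\psi|_{\widetilde V}\colon\widetilde V\to Y$ is a resolution that is an isomorphism at the generic points of $Y$, we have $K(\widetilde V)=K(Y)$ and $\bar{\Omega}^d_{K(\widetilde V)/k}=\bar{\Omega}^d_{K(Y)/k}$; consequently $\omega^{GR}_Y=\psi_*\widetilde\omega_{\widetilde V}$ and $\omega_Y=\widetilde\omega_Y$ both sit inside $\bar{\Omega}^d_{K(Y)/k}$, and the left vertical arrow $\tr$ is literally the inclusion $\psi_*\widetilde\omega_{\widetilde V}\hookrightarrow\widetilde\omega_Y$. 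Once every object is exhibited as a subsheaf of $\bar{\Omega}^d_{K(Y)/k}$ and both horizontal isomorphisms are exhibited as identifications of such subsheaves, the commutativity of (\ref{diagram01}) will be automatic, because all four maps will be honest inclusions inside one ambient sheaf.

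First I would pin down the bottom isomorphism (\ref{eq00}) concretely. Fixing a generator $dx$ of the free sheaf $\omega_A$ on the affine variety $A$, the complete intersection $V=\{\alpha_1=\cdots=\alpha_c=0\}$ carries the adjunction generator $\eta$ of $\omega_V$, characterized by $d\alpha_1\wedge\cdots\wedge d\alpha_c\wedge\eta=dx$, which I view as a meromorphic $d$-form on $V$ and hence, after projecting to the factor of $K(V)$ belonging to $Y$, as an element of $\bar{\Omega}^d_{K(Y)/k}$. Under the liaison identification $\omega_Y\simeq(\sI_X\cdot\sO_Y)\otimes\omega_V$, the bottom row of (\ref{diagram01}) then reads $\omega_Y=\{\,h\,\eta : h\in\sI_X\cdot\sO_Y\,\}$ as a subsheaf of $\bar{\Omega}^d_{K(Y)/k}$. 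Because $Y$ and $X$ share no component, every generic point of $Y$ lies off $X$, so $\sI_X\cdot\sO_Y$ and $\sI(A,cX)\cdot\sO_Y$ both agree generically on $Y$ with $\sO_Y$ (both $\sI_X$ and the multiplier ideal being trivial away from $X$). Hence the right-hand column also embeds into $\bar{\Omega}^d_{K(Y)/k}$ by $h\mapsto h\eta$, and the inclusion $i$ induced by $\sI(A,cX)\subseteq\sI_X$ (Lemma~\ref{prop03}) is simply the inclusion of these two subsheaves; in particular $i$ is injective.

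The substance of the argument is to show that the top isomorphism of Proposition~\ref{prop02} is realized by the \emph{same} recipe $h\mapsto h\eta$, now with $h$ ranging over $\sI(A,cX)\cdot\sO_Y$. For this I would trace the generator through the resolution. The adjunction isomorphism of Claim~\ref{claim02}(2) presents $\omega_{\widetilde V}\simeq\sO_{\widetilde V}(K_{\widetilde A/A}-c(T+\mu^*G))\otimes\psi^*\omega_A$ via the residue against $\widetilde\alpha_1,\dots,\widetilde\alpha_c$, and the surjection in the twisted sequence (\ref{eq01}) is exactly this residue map. Using the local relation $\psi^*\alpha_i=s\,\widetilde\alpha_i$ established in the proof of Claim~\ref{claim02} (where $s$ is a local equation of $T+\mu^*G$), together with $d(\psi^*\alpha_i)|_{\widetilde V}=s\,d\widetilde\alpha_i|_{\widetilde V}$ and the change-of-variables defining $K_{\widetilde A/A}$, I would check that $\psi^*\eta$ equals the adjunction generator $\widetilde\eta$ of $\omega_{\widetilde V}$ twisted precisely by the section cutting out $K_{\widetilde A/A}-c(T+\mu^*G)$. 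Pushing forward and invoking the identification $\psi_*\sO_{\widetilde A}(K_{\widetilde A/A}-c(T+\mu^*G))=\sI(A,cX)$ from the proof of Proposition~\ref{prop02}, this shows $\omega^{GR}_Y=\psi_*\widetilde\omega_{\widetilde V}=\{\,h\,\eta : h\in\sI(A,cX)\cdot\sO_Y\,\}$ inside $\bar{\Omega}^d_{K(Y)/k}$, with the top isomorphism given by $h\mapsto h\eta$.

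With both horizontal isomorphisms now being the identification $h\otimes\eta\leftrightarrow h\eta$ into the fixed sheaf $\bar{\Omega}^d_{K(Y)/k}$, and both vertical maps being genuine inclusions of subsheaves --- $\tr$ on the left by the regular-differential-forms construction and $i$ on the right by the previous paragraph --- diagram (\ref{diagram01}) commutes on the nose. The main obstacle I anticipate is the bookkeeping in the previous paragraph: reconciling the two adjunction generators $\eta$ (for $V\subset A$) and $\widetilde\eta$ (for $\widetilde V\subset\widetilde A$) up to the twist by $K_{\widetilde A/A}-c(T+\mu^*G)$, i.e.\ verifying that the residue construction intertwines the adjunction isomorphism used in Claim~\ref{claim02} with the definition of $\sI(A,cX)$ as a direct image, so that the single meromorphic form $\eta$ indeed generates both rows compatibly.
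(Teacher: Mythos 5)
Your proposal is correct and follows essentially the same route as the paper's own proof: fixing Kunz--Lipman regular differential forms so that the trace map becomes an honest inclusion of subsheaves of $\bar{\Omega}^d_{K(Y)/k}$, realizing both horizontal isomorphisms as the same identification by computing the complete-intersection adjunction at the generic points of $Y$ (the paper does this via the sheaves $\sH_{V,A}$, $\sH_{Y,V}$, $\sH_{\widetilde{V},\widetilde{A}}$ and the explicit maps $a_V,b_V$, etc., which at a generic point amount exactly to your generator $\eta$ with $d\alpha_1\wedge\cdots\wedge d\alpha_c\wedge\eta=dx$), and matching the resolution-side adjunction for $\widetilde{V}\subset\widetilde{A}$ with the liaison isomorphism using that $\psi$ is an isomorphism near the generic points of $Y$. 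The only quibble is that $\omega_A$ need not be globally free on a nonsingular affine variety, so the generator $dx$ should be taken locally (as the paper does, working with regular systems of parameters at each generic point), but this does not affect the argument.
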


\begin{proof}
Keep notation and construction as in the proof of Proposition \ref{prop02}. Assume that $d:=\dim X$. We first make the adjunction isomorphism $\omega_V\simeq \omega_A\otimes_{\sO_A}\sO_V$ precisely by using regular differential forms mentioned above. Recall that $V$ is a complete intersection defined by $I_V=(\alpha_1,\cdots, \alpha_c)$. Then it is clear that $I_V/I^2_V=\oplus\sO_V\bar{\alpha}_i$. Following notation of \cite[Section 13]{Lipman:DuaDiffRes} we set a sheaf
$$\sH_{V,A}:=\sHom_{\sO_V}(\wedge^c I_V/I^2_V,\widetilde{\omega}_A/I_V\widetilde{\omega}_A),$$
which is torsion free since $\widetilde{\omega}_A$ is locally free. Notice that $\sH_{V,A}=\det N_{V/A}\otimes_{\sO_A} \widetilde{\omega}_A$ where $N_{V/A}$ is the normal sheaf of $V$ in $A$.  We have the following commutative diagram
\begin{equation}\label{eq10}
\xymatrix{
\widetilde{\omega}_V \ar@{^{(}->}[d]\ar[r]^-{\simeq} & \sH_{V,A} \ar[d]^{i_V} \ar[r]^-{\simeq} &  \widetilde{\omega}_A \otimes_{\sO_A}\sO_V \ar[d]^{j_V}\\
\bar{\Omega}^d_{K(V)/k} \ar[r]^-{\simeq}_-{a_V}  & \sH_{V,A}\otimes_{\sO_V}\sK_V \ar[r]^-{\simeq}_-{b_V}& \widetilde{\omega}_A \otimes_{\sO_A} \sK_V
}\end{equation}
where the left commutative square follows from \cite[Theorem 2.3]{Hubl:AdjMorRegDiffForms} (see also \cite[Corollary 13.7]{Lipman:DuaDiffRes}) and the right commutative square is a consequence of that the sheaves inside $\sHom$ are all locally free and $\det N_{V/A}\simeq\sO_V$. The morphism $i_V$ and $j_V$ are injective because $\sH_{V,A}$ and $\widetilde{\omega}_A \otimes_{\sO_A}\sO_V$ are torsion free. Furthermore
the vertical morphisms in the diagram can be thought of induced by tensoring with the natural inclusion $\sO_V\hookrightarrow \sK_V$. Thus $\sH_{V,A}$ and $\widetilde{\omega}_A \otimes_{\sO_A}\sO_V$ are naturally as subsheaves of those locally constant sheaves at the bottom of the diagram. The adjunction isomorphism $\widetilde{\omega}_V\simeq \widetilde{\omega}_A\otimes_{\sO_A}\sO_V$ is then induced by $a^{-1}_V\circ b^{-1}_V$, i.e. $\widetilde{\omega}_V=(a^{-1}_V\circ b^{-1}_V)(\widetilde{\omega}_A\otimes_{\sO_A} \sO_V)$. The isomorphisms $a_V$ and $b_V$ can be described precisely and determined completely at each generic point of $V$. Assume $v$ is one generic point of $V$ with the residue field $k(v)$. The local ring $\sO_{A,v}$ has a regular system of parameters $\alpha_1,\cdots, \alpha_c,x_1,\cdots, x_d$. Then locally at an open set of $V$ containing only $v$ the sheaf $\bar{\Omega}^d_{K(V)/k}=k(v)d\bar x_1\wedge\cdots\wedge d\bar{x}_d$, the sheaf $\sH_{V,A}\otimes \sK_V=k(v)\xi$ where $\xi$ maps $\bar{\alpha}_1\wedge\cdots\wedge\bar{\alpha}_c$ to $d\alpha_1\wedge\cdots d\alpha_c\wedge dx_1\wedge\cdots\wedge d{x_d}$ and the sheaf $\widetilde{\omega}_A \otimes_{\sO_A} \sK_V=k(v)d\alpha_1\wedge\cdots d\alpha_c\wedge dx_1\wedge\cdots\wedge d{x_d}$. Then on this open neighborhood of $v$ the bottom line of the diagram \ref{eq10} can be written as
$$k(v)d\bar x_1\wedge\cdots\wedge d\bar{x}_d\longrightarrow k(v)\xi\longrightarrow k(v) d\alpha_1\wedge\cdots\wedge d\alpha_c\wedge dx_1\cdots\wedge dx_d.$$
The isomorphisms $a_V$ and $b_V$ are defined in the way that $a_V(d\bar x_1\wedge\cdots\wedge d\bar{x}_d)=\xi$ and $b_V(\xi)=d\alpha_1\wedge\cdots\wedge d\alpha_c\wedge dx_1\cdots\wedge dx_d$.

Next we need to make the isomorphism $\omega_Y\simeq I_X\cdot \sO_Y\otimes \omega_A$ clearly. Write the sheaf $$\sH_{Y,V}:=\sHom_{\sO_Y}(\sO_Y,\widetilde{\omega}_V/I_{Y/V}\widetilde{\omega}_V)$$ where $I_{Y/V}=I_Y\cdot \sO_V$. It is a torsion free since $\widetilde{\omega}_V$ is locally free. There is a fundamental local homomorphism (cf. \cite[13.1]{Lipman:DuaDiffRes})
$$h:\sHom_{\sO_V}(\sO_Y,\widetilde{\omega}_V)\longrightarrow \sH_{Y,V}$$
which in our case is induced by the natural quotient morphism $\widetilde{\omega}_V\longrightarrow \widetilde{\omega}_V/I_{Y/V}\widetilde{\omega}_V$.
We have the following commutative diagram
\begin{equation}\label{eq14}
\xymatrix{
\widetilde{\omega}_Y \ar@{^{(}->}[dd]\ar[r]^-{\simeq} & \sHom_{\sO_V}(\sO_Y,\widetilde{\omega}_V) \ar[d]^h \ar[r]&  \widetilde{\omega}_V\otimes_{\sO_V}I_X\cdot \sO_V \ar[d]^{u_Y}\\
 & \sH_{Y,V} \ar[d]^{i_Y} \ar[r]^-{\simeq} &  \widetilde{\omega}_V \otimes_{\sO_V}\sO_Y \ar[d]^{j_Y}\\
\bar{\Omega}^d_{K(Y)/k} \ar[r]^-{\simeq}_-{a_Y}  & \sH_{Y,V}\otimes_{\sO_Y}\sK_Y \ar[r]^-{\simeq}_-{b_Y}& \widetilde{\omega}_V \otimes_{\sO_V} \sK_Y.
}\end{equation}
The left hand side big square is commutative checked by definition directly. The right two small squares are commutative because  $\widetilde{\omega}_V$ is locally free and  $\sHom_{\sO_V}(\sO_Y,\sO_V)=I_X\cdot \sO_V$. The morphisms $i_Y$ and $i_Y\circ h$ are injective since sheaves involved are all torsion free. Now let $v$ be a generic point of $Y$ which is also a generic point of $V$. Suppose that the local ring $\sO_{A,v}$ has a regular system of parameters $\alpha_1,\cdots, \alpha_c,x_1,\cdots, x_d$. Then locally at an open set of $Y$ containing only $v$ we see that the sheaves $\bar{\Omega}^d_{K(Y)/k}=k(v)d\bar x_1\wedge\cdots\wedge d\bar{x}_d$,  $\sH_{Y,V}\otimes_{\sO_Y}\sK_Y=k(v)\xi_Y$ where $\xi_Y$ maps $1$ to $d\bar x_1\wedge\cdots\wedge d\bar{x}_d$,  and $\widetilde{\omega}_V\otimes_{\sO_V}\sK_Y=k(v)d\bar x_1\wedge\cdots\wedge d\bar{x}_d$. Thus it is easy to check that the morphism $b_Y\circ a_Y$ is the identity. Hence in the locally constant sheaf $\bar{\Omega}^d_{K(Y)/k}=\widetilde{\omega}_V\otimes_{\sO_Y}\sK_Y$ the canonical sheaf $\widetilde{\omega}_Y$ is exactly the sheaf $\widetilde{\omega}_V\otimes_{\sO_V}I_X\cdot \sO_V$. Furthermore since $Y$ is generic linked to $X$ via $V$ we see that $I_X\cdot \sO_V=I_X\cdot \sO_Y$.

Now tensoring the diagram (\ref{eq10}) with $\sO_Y$ over $\sO_V$. Notice that $\sK_V\otimes_{\sO_V}\sO_Y=\sK_Y$ since $I_{Y/V}\cdot\sK_V=\sK_X$. And combining the diagram (\ref{eq14}) together we have
\begin{equation}
\xymatrixcolsep{5pc}\xymatrix{
\widetilde{\omega}_Y \ar@{^{(}->}[dd]\ar[r]^-{=} & \widetilde{\omega}_V\otimes_{\sO_V}I_X\cdot \sO_V \ar[d]^{u_Y}\ar[r] & \widetilde{\omega}_A\otimes_{\sO_A} I_X\cdot \sO_V \ar[d]\\
 &  \widetilde{\omega}_V \otimes_{\sO_V}\sO_Y \ar[d]^{j_Y} \ar[r]& \widetilde{\omega}_A\otimes_{\sO_A}\sO_Y \ar[d]\\
\bar{\Omega}^d_{K(Y)/k} \ar[r]^-{=}_-{b_Y\circ a_Y} & \widetilde{\omega}_V \otimes_{\sO_V} \sK_Y \ar[r]^-{\simeq}_-{(b_V\circ a_V)\otimes 1_{Y}}& \widetilde{\omega}_A\otimes_{\sO_A} \sK_V\otimes_{\sO_V}\sO_Y.
}\end{equation}
The two horizontal morphisms on the top and right of the diagram can be thought of as the restriction of the right bottom morphism of locally constant sheaves on their subsheaves.
Thus the sheaf $\widetilde{\omega}_Y$ is the image of $I_X\otimes \widetilde{\omega}_A$ under the following morphisms
\begin{equation}\label{eq12}
I_X\otimes \widetilde{\omega}_A\hookrightarrow \widetilde{\omega}_A\longrightarrow \widetilde{\omega}_A\otimes \sO_Y\longrightarrow \widetilde{\omega}_A\otimes_{\sO_A} \sK_Y \xrightarrow{(b_V\circ a_V)\otimes 1_{Y}^{-1}} \bar{\Omega}^d_{K(Y)/k}
\end{equation}
where except $(b_V\circ a_V)\otimes 1_{Y}^{-1}$ the rest morphisms are all natural ones.

Now we look at the canonical sheaf $\widetilde{\omega}_{\widetilde{V}}$. Denote by $E:=T+\mu^*G$ an effective divisor on $\widetilde{A}$. Recall that $\widetilde{V}$ is locally a complete intersection on $\widetilde{A}$ and $\wedge^cI_{\widetilde{V}}/I^2_{\widetilde{V}}=\sO_{\widetilde{A}}(cE)$. The normal sheaf $N_{\widetilde{V}/\widetilde{A}}=\sO_{\widetilde{V}}(-cE)$. We define the sheaf
$$\sH_{\widetilde{V},\widetilde{A}}=\sHom_{\sO_{\widetilde{V}}}(\wedge^cI_{\widetilde{V}}/I^2_{\widetilde{V}},\widetilde{\omega}_{\widetilde{A}}/
I_{\widetilde{V}}\widetilde{\omega}_{\widetilde{A}})$$
which is torsion free since $\widetilde{\omega}_{\widetilde{A}}$ is free. Exactly as in the situation of $V$ on $A$, we have the following commutative diagram
\begin{equation}
\xymatrix{
\widetilde{\omega}_{\widetilde{V}} \ar@{^{(}->}[d]\ar[r]^-{\simeq} & \sH_{\widetilde{V},\widetilde{A}} \ar[d] \ar[r]^-{\simeq} &  \widetilde{\omega}_{\widetilde{A}} \otimes_{\sO_{\widetilde{A}}}\sO_{\widetilde{V}}(-cE) \ar[d]\\
\bar{\Omega}^d_{K(\widetilde{V})/k} \ar[r]^-{\simeq}_-{a_{\widetilde{V}}}  & \sH_{\widetilde{V},\widetilde{A}}\otimes_{\sO_{\widetilde{V}}}\sK_{\widetilde{V}} \ar[r]^-{\simeq}_-{b_{\widetilde{V}}}& \widetilde{\omega}_A \otimes_{\sO_A} \sK_V
}
\end{equation}
Thus the sheaf $\widetilde{\omega}_{\widetilde{V}}$ is the image of the sheaf $\widetilde{\omega}_{\widetilde{A}}(-cE)$ under the morphisms
\begin{equation}\label{eq11}
\widetilde{\omega}_{\widetilde{A}}(-cE) \hookrightarrow\widetilde{\omega}_{\widetilde{A}}\longrightarrow \widetilde{\omega}_{\widetilde{A}}\otimes \sO_{\widetilde{V}}\longrightarrow \widetilde{\omega}_{\widetilde{A}}\otimes_{\sO_{\widetilde{A}}}
\sK_{\widetilde{V}}\xrightarrow{(a_{\widetilde{V}}\circ b_{\widetilde{V}})^{-1}}\bar{\Omega}^d_{K(\widetilde{V})/k}
\end{equation}
where except of $(a_{\widetilde{V}}\circ b_{\widetilde{V}})^{-1}$ all morphisms are natural ones.
Push down (\ref{eq11}) via $\psi$. Notice that $\psi_*(\widetilde{\omega}_{\widetilde{A}}(-cE))=\sI(A,cX)\otimes \widetilde{\omega}_A$ and $\psi_*\widetilde{\omega}_{\widetilde{A}}=\widetilde{\omega}_A$. Also since the birational morphism $\psi$ is an isomorphism over $A\backslash X$ so it is an isomorphism around generic points of $Y$ and $\widetilde{V}$ and therefore $\psi_*(\widetilde{\omega}_{\widetilde{A}}\otimes_{\sO_{\widetilde{A}}}
\sK_{\widetilde{V}})=\widetilde{\omega}_A\otimes_{\sO_A} \sK_Y $ and $\psi_*\bar{\Omega}^d_{K(\widetilde{V})/k}=\bar{\Omega}^d_{K(Y)/k}$ as locally constant sheaves. Furthermore at one generic point $v$ of $Y$ which is identical to a generic point of $\widetilde{V}$ since $\psi$ is an isomorphism around $v$, we can choose the same local equation of the local ring $\sO_{A,v}$ and $\sO_{\widetilde{A},v}$, for instance, $\alpha_1,\cdots, \alpha_c,x_1,\cdots, x_d$. Then we can check that the morphism $(a_{\widetilde{V}}\circ b_{\widetilde{V}})^{-1}$ is the same as the morphism $(b_V\circ a_V)\otimes 1_{Y}^{-1}$. Thus we see that the sheaf $\psi_*\widetilde{\omega}_{\widetilde{V}}$ is the image of $\sI(A,cX)\otimes \omega_A$ under the morphisms
\begin{equation}\label{eq13}
\sI(A,cX)\otimes\widetilde{\omega}_A \hookrightarrow\widetilde{\omega}_A\longrightarrow \psi_*(\widetilde{\omega}_{\widetilde{A}}\otimes \sO_{\widetilde{V}})\longrightarrow \widetilde{\omega}_A\otimes_{\sO_A} \sK_Y \xrightarrow{(b_V\circ a_V)\otimes 1_{Y}^{-1}}\bar{\Omega}^d_{K(Y)/k.}
\end{equation}
The fact that $\sI(A,cX)\otimes \widetilde{\omega}_A$ is mapped surjectively to $\psi_*\widetilde{\omega}_{\widetilde{V}}$ is guaranteed by Claim \ref{claim03} in the proof of Proposition \ref{prop02}.

Finally we compare (\ref{eq12}) with (\ref{eq13}). We have the following commutative diagram on $\widetilde{A}$
\begin{equation}
\xymatrix{
\psi^*\widetilde{\omega}_A \ar[d]\ar[r] & \psi^*\widetilde{\omega}_A\otimes \sO_{\widetilde{V}} \ar[d] \ar[r] &  \psi^*\widetilde{\omega}_A\otimes_{ \sO_{\widetilde{A}}} \sK_{\widetilde{V}} \ar@{=}[d]\\
\widetilde{\omega}_{\widetilde{A}} \ar[r] & \widetilde{\omega}_{\widetilde{A}}\otimes\sO_{\widetilde{V}} \ar[r]& \widetilde{\omega}_{\widetilde{A}} \otimes_{\sO_{\widetilde{A}}} \sK_{\widetilde{V}}
}
\end{equation}
where the vertical morphisms are induced by the morphism $\psi^*\Omega^1_{A/k}\longrightarrow \Omega^1_{\widetilde{A}/k}$. Push down the diagram and notice that $\psi_*\widetilde{\omega}_{\widetilde{A}}=\widetilde{\omega}_A$ then we have the commutative diagram
\begin{equation}
\xymatrix{
& \widetilde{\omega}_A\otimes \psi_*\sO_{\widetilde{V}} \ar[d] \ar[r] &  \widetilde{\omega}_A\otimes_{\sO_A} \sK_Y \ar@{=}[d]\\
\widetilde{\omega}_A \ar[ur]^g\ar[r] & \psi_*(\widetilde{\omega}_{\widetilde{A}}\otimes\sO_{\widetilde{V}}) \ar[r]& \widetilde{\omega}_A\otimes_{\sO_A} \sK_Y
}
\end{equation}
Since $\widetilde{\omega}_A\otimes \psi_*\sO_{\widetilde{V}}$ is naturally a $\sO_Y$-module the morphism $g$ then factors through  $\widetilde{\omega}_A\longrightarrow \widetilde{\omega}_A\otimes\sO_Y\longrightarrow\widetilde{\omega}_A\otimes\psi_*\sO_{\widetilde{V}} $. Now it is clear that the proposition follows from (\ref{eq12}) and (\ref{eq13}).
\end{proof}

\begin{corollary}\label{cor01} With notation as in Definition \ref{def01}, let $Y$ be a generic link to an affine pair $(A_k,cX_k)$. Then $\omega^{GR}_Y=\omega_Y$ if and only if $I_{X_k}=\sI(A_k,cX_k)$,
where $\sI(A_k,cX_k)$ is the multiplier ideal sheaf associated to the pair $(A_k,cX_k)$.
\end{corollary}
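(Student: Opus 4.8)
The plan is to translate the sheaf equality $\omega^{GR}_Y=\omega_Y$ into an equality of ideals in $R$ and then to exploit that the matrix entries $U_{ij}$ are variables. First, by Proposition \ref{prop02} and Proposition \ref{prop04} the trace $\tr\colon\omega^{GR}_Y\hookrightarrow\omega_Y$ is carried, through the commutative diagram \ref{diagram01}, to the inclusion $i$ induced by $\sI(A,cX)\hookrightarrow I_X$. Since $\tr$ is always injective, $\omega^{GR}_Y=\omega_Y$ holds precisely when $i$ is an isomorphism, that is, when $\sI(A,cX)\cdot\sO_Y=I_X\cdot\sO_Y$. I would then read this off as an equation of ideals in $R$: because $\sI(A,cX)\subseteq I_X$ (Lemma \ref{prop03}), the equality of images in $\sO_Y=R/I_Y$ is equivalent to $I_X\subseteq \sI(A,cX)+I_Y$; and writing $x=s+y\in I_X$ with $s\in\sI(A,cX)$ and $y\in I_Y$ gives $y=x-s\in I_X\cap I_Y=I_V$, so this is in turn equivalent to $I_X=\sI(A,cX)+I_V$. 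Thus the whole statement reduces to showing that $I_X=\sI(A,cX)+I_V$ holds if and only if $\sI(A,cX)=I_X$.

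Next I would invoke the base change built into Definition \ref{def01}. Exactly as in the proof of Proposition \ref{prop02}, the factorizing resolution of $X_k$ inside $A_k$ is obtained by tensoring with $k[U_{ij}]$, so that $\sI(A,cX)=\sI(A_k,cX_k)\cdot R$ while $I_X=I_{X_k}\cdot R$. Setting $M:=I_X/\sI(A,cX)$ and $M_k:=I_{X_k}/\sI(A_k,cX_k)$, flatness of $R$ over $R_k$ gives $M=M_k\otimes_{R_k}R=M_k[U_{ij}]$, a module graded by total degree in the $U_{ij}$ whose degree-zero part is $M_k$. Now the condition $I_X=\sI(A,cX)+I_V$ says exactly that the images $\bar\alpha_1,\dots,\bar\alpha_c$ of $\alpha_i=\sum_j U_{ij}f_j$ generate $M$. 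But each $\bar\alpha_i=\sum_j U_{ij}\bar f_j$ lies in $(U_{ij})\,M$, so generation forces $M=(U_{ij})M$; comparing degree-zero parts (graded Nakayama) yields $M_k=0$, i.e. $\sI(A_k,cX_k)=I_{X_k}$. The reverse implication is immediate, since $I_V\subseteq I_X$ gives $\sI(A,cX)+I_V=I_X$ as soon as $\sI(A,cX)=I_X$.

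The only implication with content is the forward one, and its crux is the simple observation that every $\alpha_i$ is a $U$-linear combination of the $f_j$ with no constant term, so the generic link contributes nothing in $U$-degree zero and the graded Nakayama argument closes the matter. The two places needing care are the ideal-theoretic translation in the first paragraph, which rests on the linkage identity $I_V=I_X\cap I_Y$ recorded after Definition \ref{def01}, and the flat-base-change identity $\sI(A,cX)=\sI(A_k,cX_k)\cdot R$; both are already furnished by the earlier results, so no essentially new input beyond Propositions \ref{prop02} and \ref{prop04} is required.
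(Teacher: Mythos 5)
Your proof is correct and takes essentially the same route as the paper: the same reduction via the commutative diagram to $\sI(A,cX)\cdot\sO_Y=I_X\cdot\sO_Y$, the same ideal-theoretic translation to $I_X=\sI(A,cX)+I_V$ using $I_X\cap I_Y=I_V$, and the same flat base change identity $\sI(A,cX)=\sI(A_k,cX_k)\cdot R$ (which the paper gets from smooth base change, \cite[9.5.45]{Lazarsfeld:PosAG2}, rather than from the tensored resolution). Your graded Nakayama argument, exploiting that each $\alpha_i=\sum_j U_{ij}f_j$ has no constant term in the $U_{ij}$, is just an explicit rendering of the paper's terse final step of ``intersecting with $\sO_{A_k}$'', and is a welcome clarification of it.
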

\begin{proof} Let $\sI(A,cX)$ be the multiplier ideal sheaf associated to the pair $(A,cX)$. By the commutative diagram (\ref{diagram01}) we have $\omega^{GR}_Y=\omega_Y$ if and only if $\sI(A,cX)\cdot\sO_Y=I_X\cdot\sO_Y$ if and only if $I_X+I_Y=\sI(A,cX) +I_Y$. Intersecting with $I_X$  and noticing that $I_X\cap I_Y=I_V$ and $\sI(A,cX)\subseteq I_X$, we see that  $I_X+I_Y=\sI(A,cX) +I_Y$ if and only if $I_X=\sI(A,cX) +I_V$.

Now since the morphism $A\longrightarrow A_k$ is smooth, we then have $\sI(A,cX)=\sI(A_k,cX_k)\cdot\sO_A$ by \cite[9.5.45]{Lazarsfeld:PosAG2}. Also notice that $I_X=I_{X_k}\cdot \sO_A$ and the ring extension $\sO_{A_k}\longrightarrow \sO_A$ is faithfully flat. Thus intersecting with $\sO_{A_k}$, we conclude that $I_X=\sI(A,cX) +I_V$ if and only if $I_{X_k}=\sI(A_k,cX_k)$.
\end{proof}

Now we can easily deduce a criterion when a generic link has rational singularities. It turns out that multiplier ideal sheaves determine rational singularities of a generic link.

\begin{corollary}\label{prop06} With notation as in Definition \ref{def01}, let $Y$ be a generic link to an affine pair $(A_k,cX_k)$. Then $Y$ has rational singularities if and only if $X_k$ is Cohen-Macaulay and $I_{X_k}=\sI(A_k,cX_k)$,
where $\sI(A_k,cX_k)$ is the multiplier ideal sheaf associated to the pair $(A_k,cX_k)$.
\end{corollary}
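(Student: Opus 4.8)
The plan is to reduce everything to the general criterion for rational singularities recalled at the end of Section 2: a reduced equidimensional scheme has rational singularities if and only if it is Cohen-Macaulay and its Grauert--Riemenschneider canonical sheaf coincides with its dualizing sheaf. Since $Y$ is reduced equidimensional (indeed integral, by the Remark following Definition \ref{def01}), this criterion applies to $Y$, and so $Y$ has rational singularities if and only if $Y$ is Cohen-Macaulay \emph{and} $\omega^{GR}_Y=\omega_Y$. The whole proof then consists of translating each of these two conditions into the asserted conditions on $X_k$.

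First I would dispose of the condition $\omega^{GR}_Y=\omega_Y$. This is exactly the content of Corollary \ref{cor01}, which states that $\omega^{GR}_Y=\omega_Y$ holds if and only if $I_{X_k}=\sI(A_k,cX_k)$. Thus one of the two conditions on the right-hand side is already in hand with no further work.

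Next I would handle the Cohen-Macaulay condition. Because $Y$ and $X$ are geometrically linked via the complete intersection $V$ in $A$, the classical liaison theory of Peskine and Szpiro (recalled in Section 2) gives that $Y$ is Cohen-Macaulay if and only if $X$ is Cohen-Macaulay. It then remains to pass from $X$ over the extended ring down to $X_k$. Here I would use that $A=\Spec R_k[U_{ij}]\longrightarrow A_k=\Spec R_k$ is a polynomial (hence faithfully flat and smooth) extension with $I_X=I_{X_k}\cdot R_k[U_{ij}]$; since the Cohen-Macaulay property both ascends and descends along a faithfully flat map with Cohen-Macaulay (here regular) fibers, $X$ is Cohen-Macaulay if and only if $X_k$ is Cohen-Macaulay.

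Combining the two equivalences yields that $Y$ has rational singularities if and only if $Y$ is Cohen-Macaulay and $\omega^{GR}_Y=\omega_Y$, if and only if $X_k$ is Cohen-Macaulay and $I_{X_k}=\sI(A_k,cX_k)$, which is precisely the claim. The argument is essentially formal once Corollary \ref{cor01} and the liaison-invariance of Cohen-Macaulayness are available, so I do not expect a genuine obstacle; the only point requiring a little care is the faithfully flat descent of the Cohen-Macaulay property from $X$ to $X_k$, but this is entirely standard for a polynomial extension.
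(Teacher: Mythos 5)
Your proposal is correct and follows exactly the paper's route: the paper's own (one-line) proof invokes the criterion that $Y$ has rational singularities if and only if $Y$ is Cohen--Macaulay and $\omega^{GR}_Y=\omega_Y$, then appeals to Corollary \ref{cor01} for the second condition and to the standard liaison/flat-descent facts for the first, precisely the two translations you carry out. You have merely made explicit the steps (liaison-invariance of Cohen--Macaulayness and faithfully flat descent along $R_k\to R_k[U_{ij}]$) that the paper leaves implicit in ``the result is clear from above.''
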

\begin{proof} $Y$ has rational singularities if and only if $Y$ is Cohen-Macaulay and $\omega^{GR}_Y=\omega_Y$. Then the result is clear from above.
\end{proof}

\begin{corollary}\label{prop07} With notation as in Definition \ref{def01}, let $Y$ be a generic link to an affine pair $(A_k,cX_k)$. Suppose that the pair $(A_k,cX_k)$ is log canonical and $X_k$ is Cohen-Macaulay. Then $Y$ has rational singularities.
\end{corollary}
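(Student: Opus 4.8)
The plan is to deduce the statement formally from the criterion for rational singularities established in Corollary \ref{prop06}, feeding its hypotheses with Ein's Lemma. First I would recall that Corollary \ref{prop06} characterizes when a generic link has rational singularities: namely, $Y$ has rational singularities if and only if $X_k$ is Cohen-Macaulay and $I_{X_k}=\sI(A_k,cX_k)$. Since the Cohen-Macaulayness of $X_k$ is among the hypotheses, the entire problem reduces to verifying the single ideal equality $I_{X_k}=\sI(A_k,cX_k)$ between the defining ideal of $X_k$ and its associated multiplier ideal sheaf.

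For that equality I would invoke Ein's Lemma directly. The pair $(A_k,cX_k)$ is assumed log canonical, so the final clause of Ein's Lemma applies and yields $\sI(A_k,cX_k)=\sI_{X_k}$; that is, the multiplier ideal sheaf of the log canonical pair coincides with the radical defining ideal of $X_k$. Combining this with the Cohen-Macaulay hypothesis, both conditions of Corollary \ref{prop06} are met, and we conclude that $Y$ has rational singularities.

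I do not expect any genuine obstacle here, since the corollary is a purely formal consequence of the two results proved earlier. The only point that merits a moment of attention is that Ein's Lemma must be applied to the pair over the base $A_k$, which is precisely the pair entering the log canonical hypothesis, rather than to its pullback $(A,cX)$ over the extended space; with this observation the deduction is immediate.
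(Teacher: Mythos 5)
Your proposal is correct and matches the paper's own proof of Corollary \ref{prop07} exactly: the paper likewise applies the last clause of Ein's Lemma to the log canonical pair $(A_k,cX_k)$ to get $I_{X_k}=\sI(A_k,cX_k)$, and then invokes the criterion of Corollary \ref{prop06} together with the Cohen--Macaulay hypothesis. Your closing remark about applying Ein's Lemma over the base $A_k$ rather than to the pullback pair is a sound (if implicit in the paper) precaution.
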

\begin{proof} By Ein's Lemma, that $(A_k,cX_k)$ is log canonical implies $I_{X_k}=\sI(A_k,cX_k)$. Then the result follows from above.
\end{proof}

\begin{remark} Let us go back to the result of Chardin and Ulrich mentioned in Introduction. Still with notation as in Definition \ref{def01} suppose that $X_k$ is a local complete intersection with rational singularities. Then by the inversion of adjunction \cite{Ein:JetSch} the pair $(A_k,cX_k)$ is log canonical. Thus Corollary \ref{prop06} says that a generic link $Y$ of $X_k$ has rational singularities.
\end{remark}


\begin{proposition}\label{thm01} With notation as in Definition \ref{def01} let $(A_k,cX_k)$ be an affine pair and let $Y$ be a generic link to $X$ via a complete intersection $V$. Then
$$\lct(A,Y)\geq \lct(A,V)=\lct(A,X)=\lct(A_k,X_k).$$
\end{proposition}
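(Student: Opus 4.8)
The plan is to establish the chain by treating its three relations separately, two of which are essentially formal. First, the two ideal containments coming from the construction do most of the work for the outer relations. Since each $\alpha_i=\sum_j U_{ij}f_j$ lies in $I_X$, we have $I_V\subseteq I_X$; and since $I_Y=(I_V:I_X)\supseteq I_V$, we also have $I_V\subseteq I_Y$. By monotonicity of the log canonical threshold under ideal containment (a larger ideal has a larger threshold), these give at once $\lct(A,V)\le\lct(A,X)$ and the desired inequality $\lct(A,Y)\ge\lct(A,V)$. For the equality $\lct(A,X)=\lct(A_k,X_k)$ I would argue exactly as in Corollary \ref{cor01}: the morphism $A\to A_k$ is smooth, so $\sI(A,\lambda X)=\sI(A_k,\lambda X_k)\cdot\sO_A$ for every $\lambda$ by \cite[9.5.45]{Lazarsfeld:PosAG2}, and faithful flatness of $\sO_{A_k}\to\sO_A$ shows $\sI(A,\lambda X)=\sO_A$ iff $\sI(A_k,\lambda X_k)=\sO_{A_k}$; taking the supremum over those $\lambda$ with trivial multiplier ideal identifies the two thresholds.

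It remains to prove the reverse inequality $\lct(A,V)\ge\lct(A,X)$, which is the heart of the statement. The key claim is that
\[
\sI(A,\lambda V)=\sI(A,\lambda X)\qquad\text{for all }\lambda<c.
\]
To see this I would reuse the resolution $f=\psi\circ\nu\colon A'\to A$ constructed in Proposition \ref{prop02}, where $\nu\colon A'\to\widetilde{A}$ is the blow-up of the smooth codimension-$c$ center $\widetilde V$ with exceptional divisor $S$, and $E:=T+\mu^*G$. On $\widetilde A$ one has $I_X\cdot\sO_{\widetilde A}=\sO_{\widetilde A}(-E)$ with $\widetilde A$ a log resolution of $(A,X)$, so $\sI(A,\lambda X)=\psi_*\sO_{\widetilde A}(K_{\widetilde A/A}-\lfloor\lambda E\rfloor)$, while on the model $A'$ (on which $D$ has simple normal crossings, as in Claim \ref{claim03}) one has $I_V\cdot\sO_{A'}=\sO_{A'}(-D)$ with $D=S+\nu^*E$. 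Using $K_{A'/\widetilde A}=(c-1)S$ and that $\widetilde V\not\subseteq E$ (so $\nu^*E$ has no $S$-component and the round-down splits), the central computation is
\[
K_{A'/A}-\lfloor\lambda D\rfloor=\nu^*\bigl(K_{\widetilde A/A}-\lfloor\lambda E\rfloor\bigr)+\bigl((c-1)-\lfloor\lambda\rfloor\bigr)S,
\]
whose $S$-coefficient is nonnegative exactly when $\lambda<c$. Since $\nu_*\sO_{A'}(mS)=\sO_{\widetilde A}$ for $m\ge0$, pushing forward first by $\nu$ and then by $\psi$ (as in Claim \ref{claim03}) yields $\sI(A,\lambda V)=\psi_*\sO_{\widetilde A}(K_{\widetilde A/A}-\lfloor\lambda E\rfloor)=\sI(A,\lambda X)$ for $\lambda<c$.

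Finally I would convert this into the threshold statement using the bound $\lct(A,X)\le c$. This bound follows from Lemma \ref{prop03}: $\sI(A,cX)\subseteq\sI_X\subsetneq\sO_A$, whereas $\lct(A,X)>c$ would make $(A,cX)$ klt and force $\sI(A,cX)=\sO_A$, a contradiction; the same argument gives $\lct(A,V)\le c$. Now recall that $\sI(A,\lambda X)=\sO_A$ if and only if $\lambda<\lct(A,X)$, and likewise for $V$. Since the multiplier ideals of $\lambda X$ and $\lambda V$ agree for all $\lambda<c$ and both thresholds are $\le c$, a short comparison (if the thresholds differed, a value $\lambda$ strictly between them and below $c$ would make one multiplier ideal trivial and the other not) forces $\lct(A,V)=\lct(A,X)$, completing the chain. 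The main obstacle, and the only place where the geometry genuinely enters, is this middle equality: one must control the extra exceptional divisor $S$ lying over $Y$, and it is precisely the codimension bound $\lct\le c$ together with the nonnegativity of its coefficient for $\lambda<c$ that renders $S$ harmless.
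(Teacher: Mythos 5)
Your proposal is correct, and for the outer relations ($\lct(A,Y)\geq\lct(A,V)$ from $I_V\subseteq I_Y$, and $\lct(A,X)=\lct(A_k,X_k)$ from smoothness of $A\to A_k$ and faithful flatness) it matches what the paper treats as immediate; the genuinely different part is your handling of the middle equality $\lct(A,V)=\lct(A,X)$. The paper's proof of Proposition \ref{thm01} uses the same tower $f=\psi\circ\nu\colon A'\to A$, but rather than comparing multiplier ideals it first verifies (Claim \ref{claim04}) that $\widetilde{V}$ meets $T,\widetilde{E}_1,\dots,\widetilde{E}_s$ with normal crossings, so that $f$ is a log resolution of $(A,V+X)$ with $K_{A'/A}=(c-1)S+(c-1)T'+\sum k_iE_i'$, $f^{-1}(X)=T'+\sum a_iE_i'$ and $f^{-1}(V)=S+T'+\sum a_iE_i'$, and then simply reads off both thresholds: $\lct(A,X)=\min\{(k_i+1)/a_i,\,c\}$, while $\lct(A,V)$ is the same minimum with one extra term $c$ coming from $S$, which cannot lower it since $T'$ already contributes $c$. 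Your route instead proves the stronger intermediate identity $\sI(A,\lambda V)=\sI(A,\lambda X)$ for all $\lambda<c$ via the splitting $K_{A'/A}-\lfloor\lambda D\rfloor=\nu^*\bigl(K_{\widetilde{A}/A}-\lfloor\lambda E\rfloor\bigr)+\bigl((c-1)-\lfloor\lambda\rfloor\bigr)S$ and converts it into the threshold equality using $\lct\le c$ (correctly deduced from Lemma \ref{prop03}); this is in the spirit of Claim \ref{claim03} (which is essentially your computation at $\lambda=c$, where the $S$-coefficient becomes $-1$ and produces the factor $I_{\widetilde{V}}$) and it also recovers the content behind Corollary \ref{prop08}, at the cost of the round-down bookkeeping and the final comparison argument, whereas the paper's coefficient computation is shorter. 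One citation needs repair: the simple normal crossings of $D\cup\Exc(f)$ is not established in Claim \ref{claim03} (which tacitly assumes it when invoking the definition of $\sI(A,cV)$); it is precisely the content of Claim \ref{claim04}, proved from the genericity of the variables $U_{ij}$. Your observation that $\widetilde{V}\not\subseteq\Supp E$ does suffice for the coefficient identity $\lfloor\lambda\,\nu^*E\rfloor=\nu^*\lfloor\lambda E\rfloor$ (each $\nu^*\widetilde{E}_i$ and $\nu^*T$ is then the prime strict transform), but not for SNC --- a priori $\widetilde{V}$ could be tangent to $T$ or to some $\widetilde{E}_i$ --- so the appeal should be to Claim \ref{claim04}; with that correction your proof is complete.
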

\begin{proof} Keep notation and construction as in the proof of Proposition \ref{prop02}. First of all it is clear that $\lct(A,X)=\lct(A_k,X_k)$ and since $I_V\subseteq I_Y$ we have $\lct(A,Y)\geq \lct(A,V)$. Thus it suffices to show
$\lct(A,V)=\lct(A,X)$.

Recall that $\varphi_k:\overline{A}_k\longrightarrow A_k$ is a factorization resolution of singularities of $X_k$ inside $A_k$ and $\overline{X}_k$ is the strict transform of $X_k$. Denote by $\Exc(\varphi_k)=\cup^s_{i=1}E^k_{i}$ the exceptional locus of $\varphi_k$ where $E^k_i$ are prime divisors with normal crossing support. Then $\overline{X}_k$ has normal crossing with $E^k_1,\cdots,E^k_s$ (cf. Definition \ref{def03}). We then can write the effective divisor $G_k=\sum^s_{i=1}a_iE^k_i$ and the relative canonical divisor $K_{\overline{A}_k/A_k}=\sum^s_{i=1}k_iE^k_i$

Recall also that the factorizing resolution of singularities $\varphi:\overline{A}\longrightarrow A$ of $X$ inside $A$ is then obtained by tensoring $\Spec k[U_{ij}]$ with the resolution $\varphi_k$. Write $E_i:=E^k_i\otimes_k\Spec k[U_{ij}]$ for $i=1,\cdots, s$. Then it is clear that the exceptional locus of $\varphi$ is $\Exc(\varphi)=\cup^s_{i=1}E_i$ with simple normal crossing support, and the strict transform $\overline{X}$ of $X$ has normal crossing with $E_1,\cdots, E_s$. Furthermore the effective divisor $G=\sum^s_{i=1}a_iE_i$ and the relative canonical divisor $K_{\overline{A}/A}=\sum^s_{i=1}k_iE_i$.

Now the morphism $\psi:\widetilde{A}\longrightarrow A$ is the composition of $\varphi$ with the blowing-up $\mu:\widetilde{A}\longrightarrow A$ along $\overline{X}$ with the exceptional divisor $T$. We set $\widetilde{E}_i=\mu^*(E_i)$ for $i=1,\cdots, s$. Since $\overline{X}$ has normal crossing with $E_i$ we see that $\widetilde{E}_i$ is a prime divisor and $\Exc(\psi)=T\cup^s_{i=1}\widetilde{E}_i$
has simple normal crossing support. Thus $\psi:\widetilde{A}\longrightarrow A$ is a log resolution of $X$ inside $A$. Notice that $K_{\widetilde{A}/\overline{A}}=(c-1)T$. We then can write
\begin{equation}\label{eq15}
K_{\widetilde{A}/A}=(c-1)T+\sum^s_{i=1} k_i \widetilde{E}_i, \quad \psi^{-1}(X)=T+\sum^s_{i=1} a_i \widetilde{E}_i.
\end{equation}

\begin{claim}\label{claim04} Recall that $\widetilde{V}$ is nonsingular locally complete intersection on $\widetilde{A}$ (cf. Claim \ref{claim02}). Then $\widetilde{V}$ has normal crossing with $T, \widetilde{E}_1,\cdots, \widetilde{E}_s$.
\end{claim}

\textit{Proof of Claim \ref{claim04}.} The question is local so we just need to look at local equations. Let $\overline{U}_k=\Spec \overline{R}_k$ be an affine open set of $\overline{A}_k$  such that $I_{\overline{X}_k}=(\overline{f}_1,\cdots,\overline{f}_t)\subset \overline{R}_k$ and $E^k_i$ has a local equation $h_i\in \overline{R}_k$ for $i=1,\cdots, s$ (cf. proof of Claim \ref{Claim01}). Let $\overline{U}=\overline{U}_k\otimes \Spec k[U_{ij}]$ be the corresponding affine open set in $\overline{A}$. Write $\overline{R}=\overline{R}_k\otimes k[U_{ij}]$. Then $I_{\overline{X}}=(\overline{f}_1,\cdots,\overline{f}_t)\cdot \overline{R}$ and each $E_i$ is still defined by the equation $h_i$ in the ring $\overline{R}$. Now let $U_1=\Spec \overline{R}[\overline{f}_2/\overline{f}_1,\cdots, \overline{f}_t/\overline{f}_1]$ be one canonical cover of $\widetilde{A}$ over $U$. Then on $U_1$ the divisor $T$ is defined by the equation $\overline{f}_1$. Notice that each $\widetilde{E}_i$ is still defined by the local equation $h_i\in \overline{R}[\overline{f}_2/\overline{f}_1,\cdots, \overline{f}_t/\overline{f}_1]$. On $U_1$ the variety $\widetilde{V}$ is defined by $I_{\widetilde{V}}=(\widetilde{\alpha}_1,\cdots,\widetilde{\alpha}_c)$, where
$$\widetilde{\alpha}_i=U_{i,1}+U_{i,2}\overline{f}_2/\overline{f}_1+\cdots+U_{i,t}\overline{f}_2/\overline{f}_1,\quad\mbox{for }i=1,\cdots, c.$$
Now we just need to show on $U_1$, $I_{\widetilde{V}}$, $\overline{f}_1$, $h_1,\cdots, h_c$ are normal crossings. Notice that $\overline{f}_1,h_1,\cdots, h_c$ are already normal crossings by the construction and they all sit in the ring $\overline{R}_k$. But $I_{\widetilde{V}}$ is essentially defined by variables $\widetilde{\alpha}_i$'s over $\overline{R}_k$. Thus a local calculation shows that $I_{\widetilde{V}}$ meets $\overline{f}_1,h_1,\cdots, h_c$ as normal crossings. This finishes the proof of Claim \ref{claim04}.\\

Now recall $\nu: A'\longrightarrow A$ is the blowing-up of $\widetilde{A}$ along $\widetilde{V}$ with the exceptional divisor $S$ and $f=\nu\circ\psi:A'\longrightarrow A$. Write $E'_i=\nu^* \widetilde{E}_i$ for $i=1,\cdots, s$ and $T'=\nu^* T$. By Claim \ref{claim04} above we see that $T'$, $E'_1$, $\cdots$, $E'_s$ are all prime divisors and $\Exc(f)=S\cup T'\cup^s_{i=1}E'_i$ are simple normal crossings. Thus $f: A'\longrightarrow A$ is a log resolution of $(A,V+X)$, which we use to compute log canonical thresholds. Notice that $K_{A'/\widetilde{A}}=(c-1)S$. From (\ref{eq15}) we can write
$$K_{A'/A}=(c-1)S+(c-1)T'+\sum^s_{i=1}k_iE', \quad f^{-1}(X)=T'+\sum^s_{i=1}a_iE'_i.$$
Since $I_{V}\cdot \sO_{\widetilde{A}}=I_{\widetilde{V}}\cdot \sO_{\widetilde{A}}(-\psi^{-1}(X))$ (cf. Claim \ref{Claim01} and \ref{claim02}) we then have
$$f^{-1}(V)=S+T'+\sum^s_{i=1}a_iE'_i.$$
Finally by the definition of log canonical threshold we see that
$$\lct(A,X)=\min\{\frac{k_i+1}{a_i}, \frac{(c-1)+1}{1},\frac{(c-1)+1}{0}\}$$
and
$$\lct(A,V)=\min\{\frac{k_i+1}{a_i},\frac{(c-1)+1}{1},\frac{(c-1)+1}{1}\}.$$
Therefore $\lct(A,X)=\lct(A,V)$ as required.
\end{proof}

\begin{corollary}\label{prop08} With notation as in Definition \ref{def01}, if $I_X=\sI(A,cX)$, where $\sI(A,cX)$ is the multiplier ideal sheaf associated to the pair $(A,cX)$, then $$I_V=\sI(A,cV) \mbox{ and }I_Y=\sI(A,cY),$$ where $\sI(A,cV)$ and $\sI(A,cY)$ are multiplier ideal sheaves associated to the pairs $(A,cV)$ and $(A,cY)$, respectively.
\end{corollary}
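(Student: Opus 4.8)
The plan is to deduce both equalities from Ein's Lemma, which reduces each assertion $\sI(A,cZ)=I_Z$ (for $Z$ reduced equidimensional of codimension $c$) to the single triviality statement $\sI(A,(c-1)Z)=\sO_A$. The hypothesis $I_X=\sI(A,cX)$ is, by the same lemma, equivalent to $\sI(A,(c-1)X)=\sO_A$. Both $V$ and $Y$ are reduced equidimensional of codimension $c$ (the complete intersection $V=X\cup Y$ is reduced since $X$ and $Y$ are reduced with no common component, and $Y$ is as in the Remark following Definition \ref{def01}), so Ein's Lemma applies to each. Thus it suffices to prove $\sI(A,(c-1)V)=\sO_A$ and $\sI(A,(c-1)Y)=\sO_A$.

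For $V$ I would reuse the explicit log resolution $f:A'\longrightarrow A$ of $(A,V+X)$ constructed in the proof of Proposition \ref{thm01}, together with the divisorial identities recorded there, namely $K_{A'/A}=(c-1)S+(c-1)T'+\sum_i k_iE'_i$ and $f^{-1}(V)=S+T'+\sum_i a_iE'_i$. Subtracting, the coefficients of $S$ and $T'$ cancel and one is left with $K_{A'/A}-(c-1)f^{-1}(V)=\sum_i(k_i-(c-1)a_i)E'_i$. Now $\sI(A,(c-1)X)=\sO_A$, read off on the log resolution $\psi$ of $X$ (where $K_{\widetilde{A}/A}-(c-1)\psi^{-1}(X)=\sum_i(k_i-(c-1)a_i)\widetilde{E}_i$), says precisely that $k_i-(c-1)a_i\geq 0$ for every $i$; hence the divisor above is effective. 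Since $f$ is in particular a log resolution of $V$ and $(c-1)f^{-1}(V)$ already has integer coefficients (so no rounding occurs), we get $\sI(A,(c-1)V)=f_*\sO_{A'}\big(\sum_i(k_i-(c-1)a_i)E'_i\big)\supseteq\sO_A$; as a multiplier ideal is always contained in $\sO_A$, equality holds. Ein's Lemma then yields $I_V=\sI(A,cV)$.

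For $Y$ the argument is soft: since $I_V=I_X\cap I_Y\subseteq I_Y$, monotonicity of multiplier ideals in the defining ideal (a larger ideal gives a larger multiplier ideal) yields $\sO_A=\sI(A,(c-1)V)\subseteq \sI(A,(c-1)Y)\subseteq\sO_A$, so $\sI(A,(c-1)Y)=\sO_A$, and Ein's Lemma gives $I_Y=\sI(A,cY)$. I expect the only real work to be the computation for $V$: everything hinges on verifying that the two extra exceptional divisors $S$ and $T'$ enter $K_{A'/A}$ and $(c-1)f^{-1}(V)$ with the same coefficient $c-1$, so that they drop out and leave exactly the inequality $k_i\geq(c-1)a_i$ supplied by the hypothesis on $X$. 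The step for $Y$ presents no difficulty beyond invoking monotonicity, and note that the reverse containment $\sI(A,(c-1)V)\subseteq\sI(A,(c-1)X)$ coming from $I_V\subseteq I_X$ is useless here, which is exactly why the $V$ case must be handled by the explicit resolution rather than by monotonicity.
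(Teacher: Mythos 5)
Your proof is correct and takes essentially the same route as the paper: the paper likewise reduces both equalities, via Ein's Lemma, to the triviality of $\sI(A,(c-1)V)$ and $\sI(A,(c-1)Y)$, and obtains that triviality from Proposition \ref{thm01} ($\lct(A,Y)\geq\lct(A,V)=\lct(A,X)>c-1$), whose proof consists of exactly the divisorial identities on $f:A'\longrightarrow A$ that you recompute and, for $Y$, the same monotonicity coming from $I_V\subseteq I_Y$. The only difference is packaging: you inline that computation in the language of effective divisors and multiplier-ideal triviality, whereas the paper cites the log canonical threshold inequality of Proposition \ref{thm01} as a black box.
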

\begin{proof} By Ein's Lemma, $I_X=\sI(A,cX)$ if and only if $\sI(A,(c-1)X)=\sO_A$. Thus $\lct(A,X)>(c-1)$ and therefore by Theorem \ref{thm01} $\lct(A,Y)\geq\lct(A,V)=\lct(A,X)>(c-1)$. Hence the multiplier ideal sheaves $\sI(A,(c-1)Y)$ and $\sI(A,(c-1)V)$ are all trivial. The result then follows by using Ein's Lemma again.
\end{proof}

\begin{remark} In the above corollary, the equality $I_X=\sI(A,cX)$ is equivalent to the equality $I_{X_k}=\sI(A_k,cX_k)$, where $\sI(A_k,cX_k)$ is the multiplier ideal sheaf associated to the pair $(A_k,cX_k)$. This is because the morphism $A\longrightarrow A_k$ is smooth and the ring extension $R_k\longrightarrow R$ is faithfully flat.
\end{remark}

\begin{corollary}\label{prop09} With notation as in Definition \ref{def01}, let $Y$ be a generic link to an affine pair $(A_k,cX_k)$. Suppose that the pair $(A_k,cX_k)$ is log canonical. Then the pair $(A,cY)$ is also log canonical.
\end{corollary}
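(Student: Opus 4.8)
The plan is to recast log canonicity entirely in terms of log canonical thresholds and then quote the threshold comparison already established in Proposition \ref{thm01}. The starting observation is that for a subscheme $Z \subset A$ of codimension $c$, the pair $(A, cZ)$ is log canonical precisely when $\lct(A, Z) \geq c$: unwinding the definitions, if $f: A' \to A$ is a log resolution with $f^{-1}(Z) = \sum a_i E_i$ and $K_{A'/A} = \sum k_i E_i$, then the condition $k_i - c\, a_i \geq -1$ for all $i$ is exactly $(k_i+1)/a_i \geq c$ for every $i$ with $a_i > 0$, i.e. $\min_i (k_i+1)/a_i = \lct(A, Z) \geq c$. The divisors with $a_i = 0$ impose no constraint, since $A$ is nonsingular and hence $k_i \geq 0$ there.

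With this reformulation in hand the argument is immediate. First I would record that $Y$ is equidimensional of codimension $c$, so that $(A, cY)$ is precisely the pair to which the reformulation applies. The hypothesis that $(A_k, cX_k)$ is log canonical gives $\lct(A_k, X_k) \geq c$. Then Proposition \ref{thm01} supplies the chain
$$\lct(A, Y) \geq \lct(A, V) = \lct(A, X) = \lct(A_k, X_k) \geq c,$$
and the reformulation turns $\lct(A, Y) \geq c$ back into the assertion that $(A, cY)$ is log canonical.

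It is worth flagging why one should route the proof through thresholds rather than through the ideal-theoretic Corollary \ref{prop08}. The latter, combined with Ein's Lemma, would only yield $\sI(A, (c-1)Y) = \sO_A$, that is $\lct(A, Y) > c-1$, which is strictly weaker than the bound $\lct(A, Y) \geq c$ required for log canonicity. Thus the genuine input is the threshold comparison in Proposition \ref{thm01}, and essentially no obstacle remains: once that proposition is invoked, the only point demanding care is to confirm that the coefficient $c$ appearing in the pair $(A, cY)$ coincides with the codimension of $Y$, so that the inequality on thresholds translates faithfully into the log canonical condition.
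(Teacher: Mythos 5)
Your proof is correct and takes essentially the same route as the paper: both reduce log canonicity of $(A,cY)$ to the threshold inequality $\lct(A,Y)\geq c$ and obtain it from Proposition \ref{thm01} applied to $\lct(A_k,X_k)\geq c$. The only cosmetic difference is that the paper also invokes Lemma \ref{prop03} to get the reverse inequality and conclude $\lct(A,Y)=c$ exactly, a step your reformulation (including the correct handling of divisors with $a_i=0$) rightly shows is unnecessary for the conclusion.
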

\begin{proof} Since, by assumption, $(A_k,cX_k)$ is log canonical and, by Lemma \ref{prop03}, $\sI(A_k,cX_k)\subseteq I_{X_k}$ we see that $\lct(A_k,X_k)=c$. Thus by Theorem \ref{thm01}, we have $\lct(A,Y)\geq c$. But by Lemma \ref{prop03}, we have $\lct(A,Y)\leq c$. Therefore $\lct(A,Y)=c$ and thus $(A,cY)$ is log canonical.
\end{proof}

\begin{proof}[{\bf Proof of Theorem 1.1 in Situation A}]

First of all in this case, $Y$ cannot be empty. The fact that $Y$ is reduced is standard by \cite[2.6]{HunekeUlrich:DivClass}. The rest of the theorem follows from Proposition \ref{prop02}, Proposition \ref{prop04} and Proposition \ref{thm01}.
\end{proof}

\begin{remark}\label{rmk02} Using results established above, we then look at a sequence of generic linkages. Precisely, let $(A_k,cX_k)$ be an affine pair as in Definition \ref{def01} and set $Y_0:=X_k$ and $A_0:=A_k$. We denote a generic link of $Y_0$ to be $Y_1$, which is in a nonsingular ambient space $A_1$. We can continue to construct a generic link of $Y_1$ as $Y_2$ in an nonsingular ambient space $A_2$. Consequently, we get a sequence $Y_0, Y_1, \cdots$, such that each $Y_i$ is a generic link of $Y_{i-1}$ and is in a nonsingular variety $A_i$. Now we list some interesting consequences from the above results as follows.
\begin{itemize}
\item [(1)] If $I_{Y_0}=\sI(A_0,cY_0)$, then $I_{Y_i}=\sI(A_i,cY_i)$, i.e. equality of multiplier ideal with ideal is preserved by generic linkages.
\item [(2)] If $(A_0,cY_0)$ is log canonical then $(A_i,cY_i)$ is log canonical, i.e. log canonical pair is preserved by generic linkage.
\item [(3)] If $(A_0,cY_0)$ is log canonical and $Y_0$ is rational then $(A_i,cY_i)$ is log canonical and $Y_i$ is rational, i.e. log canonical plus rational is preserved by generic linkages.
\item [(4)] $\lct(A_0,Y_0)\leq \lct(A_1,Y_1)\leq \cdots\leq c$, i.e. log canonical thresholds increase in generic linkages but bounded above by $c$.
\end{itemize}
Notice that in (4) we get an increasing but bounded above sequence. Thus it must have a limit, which we denoted by $\lct_{\infty}(A_0,Y_0)$. It would be very interesting to know  if $\lct_{\infty}(A_0,Y_0)$ is independent on the choice of generic linage sequence $Y_1,Y_2,\cdots$. It has been conjectured by the author that after finitely many steps of generic link sequences we will have $\lct_{\infty}(A_0,Y_0)=c$. However, it was pointed out by Bernd Ulrich that the conjecture is not true because otherwise if we start with a Cohen-Macaulay $Y_0$ we will end up with a variety $Y$ which has rational singularities, but it is not the case.
\end{remark}

\begin{remark} There is a conjecture made by the author in \cite[Conjecture 1.4]{Niu:ABoundCMofLC} which asserts that if $X$ is a local complete intersection with log canonical singularities then a generic link $Y$ of $X$ is also a local complete intersection with log canonical singularities. Now it is clear that this conjecture is false. One main reason is that $Y$ cannot be a local complete intersection. However, Corollary \ref{prop09} says that the log canonical pair is preserved by generic linkages and in the conjecture the pair $(A,cX)$ is actually log canonical.
\end{remark}

In the last of this section, we consider specialization problem stated in Situation B. It is a direct consequence of Situation A by restricting to the general fiber over $\Spec k[U_{i,j}]$. This quick proof of the theorem was suggested by the referee.

\begin{proof}[{\bf Proof of Theorem 1.1 in Situation B}] Keep the notation and construction in the proof of Proposition \ref{prop02}. Let $\nA=\Spec k[U_{i,j}]$ be the affine space parameterizing the scalar matrices $(a_{i,j})_{c\times t}$ . Recall that we have the following diagram
\begin{equation}
\xymatrix{
\widetilde{V} \ar@{^{(}->}[r]\ar[d] & \widetilde{A} \ar[d]^{\psi}   \\
Y  \ar@{^{(}->}[r] & A
}\end{equation}
where $\widetilde{V}$ is the embedded resolution of $Y$ (cf. Claim \ref{claim02}).
It is a diagram over $\nA$. Thus it is enough to prove the theorem for a general fiber of this diagram over $\nA$. Since we assume a general fiber of $Y$ over $\nA$ is nonempty the morphism $Y\longrightarrow \nA$ must be dominant. (In fact the only case that a general fiber of $Y$ is empty is that $X_k$ is a complete intersection. In this case we cannot make generic link under Situation B.) Thus the image of $Y$ must contain a open set $U$ of $\nA$. By replacing $\nA$ by this open set we may assume that the morphism $Y\longrightarrow \nA$ is surjective. Now over a  general point $p\in \nA$, the diagram is
\begin{equation*}
\xymatrix{
\widetilde{V}_p \ar@{^{(}->}[r]\ar[d] & \widetilde{A}_p \ar[d]^{\psi_p}   \\
Y_p  \ar@{^{(}->}[r] & A_p
}\end{equation*}
Notice that by the construction $\widetilde{A}_p$ is the blowing-up of $\overline{A}_k$ along $\overline{X}_k$ and the morphism $\psi_p$ is an isomorphism over $A_k\slash X_k$. Also by the generic smoothness $\widetilde{V}_p$ is the embedded resolution of $Y_p$. Thus $Y_p$ is generically reduced. But $Y_p$ does not have embedded components and therefore is reduced. Now the rest of the argument is exact the same as the proof of the theorem in Situation A.
\end{proof}

\begin{remark}\label{rmk03} Corollary \ref{prop07}, \ref{prop08} and  \ref{prop09} are still true in Situation B. However, in Corollary \ref{cor01} and \ref{prop06} the ``if" parts are still true but ``only if" parts are not in general.
\end{remark}

\begin{remark}\label{rmk04} As in Remark \ref{rmk02} (4), we can look at an increasing sequence of log canonical thresholds $\lct(A,Y_0)\leq \lct(A, Y_1)\leq \cdots\leq c$, in which $Y_i$ is a generic link of $Y_{i-1}$ in Situation B. It was pointed out by Lawrence Ein that by using ACC for log canonical thresholds \cite{Ein:ACConSmooth}, after finitely many steps the number $\lct$ becomes stable in the sequence, which we denoted by $\lct_{\infty}(A,Y_0)$. At the moment, it is not clear to us if this number depends on the specific linkage sequence or not. It would be very interesting to have a further investigation on this number. We hope that this new invariant will have some application in linkage classes of varieties.
\end{remark}

\section{Generic linkage of projective varieties and Castelnuovo-Mumford regularity}

\noindent In this section, we study a generic link of a subvariety of $\nP^n$ in Situation C.  The main idea is inspired by the work of Betram, Ein and Lazarsfeld \cite{BEL}. Thus we shall be brief in proofs. Throughout this section, we assume that $A$ is a projective nonsingular variety over $k$ and  $L$ is a line bundle on $A$ generated by its global sections. We shall prove the following theorem which can be applied to a slightly more general case than Situation C.

\begin{theorem}\label{prop01} Suppose that $X\subset A$ is a reduced equidimensional subscheme of codimension $c$ scheme-theoretically defined by the $t$ sections $s_i\in H^0(A, L^{d_i})$ with $d_1\geq d_2\geq\cdots\geq d_t$. Take $c$ general sections $\alpha_i\in H^0(A,\sI_X\otimes L^{d_i})$ for $i=1,\cdots, c$ and let $V$ be a complete intersection defined by the vanishing of $\alpha_1,\cdots,\alpha_c$. Let $Y$ be a subscheme of $A$ defined by  $\sI_Y:=(\sI_V:\sI_X)$. Then either $Y$ is empty or else:
\begin{itemize}
\item [(1)] $Y$ is reduced equidimensional of codimension $c$ (possibly reducible) geometrically linked to $X$ via $V$.
\item [(2)] $\omega^{GR}_Y\simeq\sI(A,cX)\cdot \sO_Y\otimes\omega_A \otimes L^{d_1+\cdots+d_c} $,
where $\sI(A,cX)$ is the multiplier ideal sheaf associated to the pair $(A,cX)$, and it fits into the following commutative diagram
    $$\xymatrix{
\omega^{GR}_Y \ar[r]^-{\simeq}\ar@{^{(}->}[d]_{\tr} & {\sI(A,cX)\cdot \sO_Y\otimes \omega_A\otimes L^{d_1+\cdots+d_c}} \ar@{^{(}->}[d]^i   \\
\omega_Y \ar[r]^-{\simeq}  & \sI_X\cdot\sO_Y\otimes \omega_A\otimes L^{d_1+\cdots+d_c}
}$$
where the bottom one is given by \ref{eq04}, $\tr$ is the trace map, and the inclusion $i$ is induced by $\sI(A,cX)\hookrightarrow \sI_X$ (cf. Lemma \ref{prop03}).
\item [(3)] $\lct(A,Y)\geq \lct(A,V)=\lct(A,X)$.
\end{itemize}
\end{theorem}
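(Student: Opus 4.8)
The plan is to run the affine argument of Section~3 essentially verbatim on a factorizing resolution of $X$, letting the Bertram--Ein--Lazarsfeld theory \cite{BEL} supply the genericity that the variables $U_{ij}$ provided in Situation~A. Concretely, I would take a factorizing resolution $\varphi\colon \overline{A}\to A$ of $X$ inside $A$ as in the proof of Proposition~\ref{prop02}, so that $\sI_X\cdot\sO_{\overline{A}}=\sI_{\overline{X}}\cdot\sO_{\overline{A}}(-G)$, then blow up the strict transform $\overline{X}$ to obtain $\mu\colon\widetilde{A}\to\overline{A}$ with exceptional divisor $T$, and set $\psi=\varphi\circ\mu$ and $E=T+\mu^*G$. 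Since each $\alpha_i\in H^0(A,\sI_X\otimes L^{d_i})$, the pullback $\psi^*\alpha_i$ vanishes along $E$ and factors as $\psi^*\alpha_i=\widetilde{\alpha}_i\cdot(\text{equation of }E)$ with $\widetilde{\alpha}_i\in H^0(\widetilde{A},\psi^*L^{d_i}(-E))$; these $\widetilde{\alpha}_i$ cut out the candidate embedded resolution $\widetilde{V}$ of $Y$, exactly as in Claims~\ref{Claim01} and~\ref{claim02}.

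The key step, and the place I expect the real difficulty, is to show that for general $\alpha_i$ the scheme $\widetilde{V}$ is smooth of codimension $c$ and meets $T$ together with the components $\widetilde{E}_i$ of $\Exc(\psi)$ transversally, so that $\psi|_{\widetilde{V}}$ is a genuine resolution of $Y$. The obstruction is that the systems $|\sI_X\otimes L^{d_i}|$ need not be base-point free away from $X$ once $i\geq 2$, so Bertini does not apply directly. The positivity that is available for free is that the top system $|\sI_X\otimes L^{d_1}|$ has no base points off $X$: a base point $p\notin X$ would force every $s_j$ to vanish at $p$, since $L$ is globally generated and $d_1$ is the largest degree, contradicting that the $s_j$ cut out $X$ scheme-theoretically. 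I would then follow \cite{BEL}, choosing the $\alpha_i$ one at a time in order of decreasing degree and applying Bertini inductively on $\widetilde{A}$, to conclude that the general $\widetilde{V}$ is smooth, transverse to the exceptional locus, and equal to the strict transform of $Y$; in particular $Y$ is generically reduced. Granting this, part~(1) follows from standard liaison: $Y$ is equidimensional of codimension $c$ and geometrically linked to $X$ via $V$, and since $V$ is a complete intersection, hence Cohen--Macaulay, $Y$ has no embedded components, so $Y$ is reduced.

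With $\widetilde{V}$ in hand, part~(2) is the computation of Propositions~\ref{prop02} and~\ref{prop04} carried over unchanged except for an extra twist by $L^{d_1+\cdots+d_c}$. The adjunction computation of $\omega_{\widetilde{V}}$ now reads $\omega_{\widetilde{V}}\simeq\sO_{\widetilde{V}}(K_{\widetilde{A}/A}-cE)\otimes\psi^*(\omega_A\otimes L^{d_1+\cdots+d_c})$, because the $\widetilde{\alpha}_i$ are sections of $\psi^*L^{d_i}(-E)$; pushing down the twisted structure sequence of $\widetilde{V}$ and invoking the vanishing of Claim~\ref{claim03} yields $\omega^{GR}_Y\simeq\sI(A,cX)\cdot\sO_Y\otimes\omega_A\otimes L^{d_1+\cdots+d_c}$. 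The extra factor $L^{d_1+\cdots+d_c}$ rides through every pushforward by the projection formula, being pulled back from $A$, and the relative Kawamata--Viehweg vanishing of Claim~\ref{claim03} is local over $A$ and concerns only the $f$-nef exceptional divisor $-D$, hence is untouched by the twist; compatibility with the trace map and with the canonical isomorphism~\ref{eq04} is checked exactly as in Proposition~\ref{prop04} via regular differential forms. Finally part~(3) is identical to Proposition~\ref{thm01}: the same log resolution $f\colon A'\to A$ and the same discrepancy bookkeeping as in~\ref{eq15} give $\lct(A,V)=\lct(A,X)$, while $\sI_V\subseteq\sI_Y$ gives $\lct(A,Y)\geq\lct(A,V)$, the entire computation being local and birational and so insensitive to whether $A$ is affine or projective.
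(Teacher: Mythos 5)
Your proposal is correct and takes essentially the same route as the paper's own proof: the paper likewise works on a log resolution principalizing $\sI_X$ (your $\psi$ with $E=T+\mu^*G$ is exactly such a model, and is the one on which Propositions \ref{prop02}, \ref{prop04} and \ref{thm01} were already set up), establishes the base-locus control you delegate to \cite{BEL} as its Claim \ref{claim05} --- namely $F_1\cap\cdots\cap F_t=\emptyset$ from scheme-theoretic generation, $\mf{B}_1$ base point free, and $\Bs(\mf{B}_i)\subseteq F_j$ for $j\geq i$ via the inclusion $f^*|L^{d_i-d_j}|+F_j\subseteq \mf{B}_i$ --- and then runs your decreasing-degree Bertini induction to get $\overline{Y}=D_1\cap\cdots\cap D_c$ smooth, transverse to $\Exc(f)$, and equal to the strict transform of $Y$, after which parts (2) and (3) are obtained, as you say, by repeating the arguments of Propositions \ref{prop02}, \ref{prop04} and \ref{thm01} with the twist $L^{d_1+\cdots+d_c}$ riding through every pushforward by the projection formula. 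The only difference is presentational: the paper spells out the inductive condition $D_1\cap\cdots\cap D_i\cap F_{i+1}\cap\cdots\cap F_t=\emptyset$ that makes each successive system base point free on the previous intersection, where you cite \cite{BEL} for the same mechanism.
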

\begin{proof} Take a log resolution of singularities for the pair $(A,X)$ as
$f:\overline{A}\longrightarrow A$
such that $\sI_X\cdot\sO_{\overline{A}}=\sO_{\overline{A}}(-E)$ where $E$ is an effective divisor and $\Exc(f)\cup E$ is a divisor with simple normal crossing support. We may also assume that the morphism $f$ is an isomorphism over the open set $A\backslash X$.

For $i=1,\cdots, t$ we denote by $\mf{b}_i$ the sub-linear system of $|L^{d_i}|$ determined by the vector space $H^0(A, \sI_X\otimes L^{d_i})$. We use notation $(s_i)_0$ to be the effective divisor in the linear system $\mf{b}_i$ defined by the zero locus of the section $s_i$. Since $X$ is defined by the vanishing of sections $s_i$ we have a surjective morphism
$\oplus^tL^{-d_i}\xrightarrow{(s_1,\cdots,s_t)}\sI_X\longrightarrow 0$.
Pulling back this surjective morphism via $f$, we then obtain a surjective morphism
\begin{equation}\label{eq03}
\bigoplus^tf^*L^{-d_i}\xrightarrow{(f^*s_1,\cdots,f^*s_t)}\sO_{\overline{A}}(-E)\longrightarrow 0.
\end{equation}
(Since $f$ is dominant the induced morphism on the linear system $f^*:\mf{b}_i\longrightarrow |f^*L^{d_i}|$ is actually injective. So we think of $\mf{b}_i$ naturally as a sub-linear system of $|f^*L^{d_i}|$ under $f^*$.)
Denote by $\mf{B}_i=f^*\mf{b_i}-E$ the sub-linear system of $|f^*L^{d_i}(-E)|$ obtained from $f^*\mf{b}_i$ by removing the base locus divisor $E$.  Then the section $f^*s_i$ naturally gives rise to a section $\sigma_i$ of $f^*L^{d_i}(-E)$ defining an effective divisor $F_i$ in the linear system $\mf{B}_i$. Thus from the surjectivity of (\ref{eq03}), we deduce a surjection
$$\bigoplus^tf^*L^{-d_i}(E)\xrightarrow{(\sigma_1,\cdots,\sigma_t)}\sO_{\overline{A}}\longrightarrow 0,$$
and therefore we have
\begin{equation}\label{eq02}
F_1\cap F_2\cap\cdots\cap F_t=\phi.
\end{equation}
We make the following observation.

\begin{claim}\label{claim05}
For each $i=1,\cdots, t$, one has
\begin{itemize}
\item [(a)] The system $\mf{B}_1$ is base point free.
\item [(b)] For each $i\geq 1$, the base locus $\Bs(\mf{B}_i)$ of the system $\mf{B}_i$ is inside the support of the divisor $F_j$ for $j\geq i$.
\end{itemize}
\end{claim}

\textit{Proof of Claim \ref{claim05}.} For the statement (a), by the definition of $X$, we see that the sheaf $\sI_X\otimes L^{d_1}$ is generated by its global sections. Let $W_1=H^0(A,\sI_X\otimes L^{d_1})$ so that $\mf{b}_1=|W|$. So we have a surjective morphism $W_1\otimes L^{d_1}\longrightarrow \sI_X\longrightarrow 0$. Thus we have a surjective morphism $W_1\otimes f^*L^{-d_1}\longrightarrow \sO_{\overline{A}}(-E)\longrightarrow 0$, i.e., $W_1\otimes f^*L^{-d_1}(E)\longrightarrow \sO_{\overline{A}}\longrightarrow 0$. Thus we see that $\mf{B}_1$ is base point free.

For the statement (b), notice that when $i=1$ the result is trivial from (a). We prove the first nontrivial case when $i=2$. It is from the definition of base loci that $\Bs(\mf{B}_2)\subset F_2$. Now we show $\Bs(\mf{B}_2)\subset F_3$. Denote by $\delta^2_3$ the linear system $|L^{d_2-d_3}|$ which is base point free. Notice that we have an inclusion $\delta^2_3+(s_3)_0\subset \mf{b}_2$. Thus pulling back via $f$, we have the inclusion $f^*\delta^2_3+f^*(s_3)_0\subset f^*\mf{b}_2$ and therefore by subtracting the divisor $E$ we see $f^*\delta^2_3+f^*(s_3)_0-E\subset f^*\mf{b}_2-E$. Recall that $F_3=f^*(s_3)_0-E$ and $\mf{B}_2=f^*\mf{b}_2-E$. Thus we have an inclusion
$$f^*\delta^2_3+F_3\subset \mf{B}_2.$$
From this, the linear system $\mf{B}_2-F_3$ contains the system $f^*\delta^2_3$, which is base point free. Thus the base locus $\Bs(\mf{B}_2)$ is contained in $F_3$. Similar argument works for all $j\geq i$, which proves the Claim \ref{claim05}.\\

Now since $\mf{B}_1$ is base point free, by Bertini's theorem (Cf. \cite[Corollary III.10.9]{Har:AG}) we can take a general element $D_1\in \mf{B}_1$ such that (i) $D_1$ is nonsingular and equidimensional; (ii) no components of $D_1$ are contained in the support of $E\cup \Exc(f)$; (iii) $D_1$ has normal crossing with $\Exc(f)$; (iv) $D_1\cap F_2\cap\cdots\cap F_t=\phi$. Here the reason for (iv) is that since the section $\sigma_1$ is nowhere vanishing on $F_2\cap\cdots\cap F_t$, the general section of $\mf{B}_1$ is then nowhere vanishing on $F_2\cap\cdots\cap F_t$.

Now by Claim \ref{claim05} the base locus $\Bs(\mf{B}_2)$ is inside $F_j$ for $j\geq 2$. Thus we have $\Bs(\mf{B}_2)\subset F_2\cap F_3\cap\cdots\cap F_t$. By the fact (\ref{eq02}) and the choice of $D_1$, the linear system $\mf{B}_2$ is base point free on $D_1$. Thus by Bertini's theorem again we can choose a general element $D_2\in \mf{B}_2$ such that (i) $D_1\cap D_2$ is nonsingular and equidimensional; (ii) no components of $D_1\cap D_2$ are contained in the support of $E\cup \Exc(f)$; (iii) $D_1\cap D_2$ has normal crossing with $\Exc(f)$; (iv) $D_1\cap D_2\cap F_3\cap\cdots\cap F_t=\phi$.

We then can iterate such argument by $c$ times to end up with a subscheme $$\overline{Y}:=D_1\cap D_2 \cap\cdots \cap D_c$$ of $\overline{A}$ such that $\overline{Y}$ is either empty or else (i) $\overline{Y}$ is nonsingular and equidimensional; (ii) no component of $\overline{Y}$ is contained in the support of $E\cup \Exc(f)$; (iii) $\overline{Y}$ has normal crossing with $\Exc(f)$; (iv) $\overline{Y}\cap F_{c+1}\cap\cdots\cap F_t=\phi$.
Notice that each effective divisor $D_i$ is a general element in the linear system $\mf{B}_i$.

Now each $D_i$ naturally corresponds to a general section $\alpha_i\in H^0(A,\sI_X\otimes L^{d_i})$. (Recall that the divisor $D_i+E$ is in $f^*\mf{b}_i$.) Those $\alpha_1,\cdots, \alpha_c$ cut out a complete intersection $V$ on $A$. Let $Y$ be the subscheme of $A$ linked to $X$ via $V$, i.e., $Y$ is defined by an ideal sheaf $\sI_Y:=(\sI_V:\sI_X)$. It is well-know that $Y$ is equidimensional of codimension $c$ without embedded components and with no common components with $X$. Notice that at least set-theoretically $Y=f(\overline{Y})$. Recall that the morphism $f$ is an isomorphism over the open set $A\backslash X$. Thus by the construction of $\overline{Y}$ we see that $\overline{Y}\cap f^{-1}(U)$ is isomorphic to $Y\cap U$. Therefore $f$ is an isomorphism at the generic points of $\overline{Y}$ and then $\overline{Y}$ is the strict transform of $Y$. Thus $Y$ is generically smooth and therefore is reduced. The rest of the statement (1) are all standard result, so we would not repeat here.

By the construction of $\overline{Y}$, we have a surjective morphism
$\oplus^cf^*L^{-d_i}(E)\longrightarrow \sI_{\overline{Y}}\longrightarrow 0$. Thus the normal bundle of $\overline{Y}$ inside $\overline{A}$ can be easily calculated and the rest argument for (2) is exactly the same as the proof of Propositions \ref{prop02} and \ref{prop04}. The reducedness of $Y$ in (1) is also clear since $Y$ is generic reduced and has no any embedded components. Finally, the statement (3) has the same proof as in Proposition \ref{thm01} so we would not repeat here.
\end{proof}

\begin{remark} If $A=\nP^n$, we can actually reduce Situation C to Situation B, at least in the case that all equations have the same degree. This was kindly suggested by the referee. Here let us explain this reduction briefly. We choose general homogeneous coordinates for $\nP^n$ so that all the equations $f_i$'s restricted to the same degree polynomials, say $g_i$'s, on the standard charts. Then we choose a general matrix $B=(b_{ij})_{c\times t}$ and we can find a invertible lower triangular $c\times c$ matrix $A$ so that $A\cdot B=(B_1|B_2)$, where $B_1$ is a $c\times c$ upper triangular matrix. Now the complete intersection $(\beta_1,\cdots,\beta_c) =A\cdot B\cdot (g_1,\cdots,g_t)^T$ will give back general sections $\alpha_i$ in $H^0(\nP^n,\sI_X(d_i))$.
\end{remark}

As an application of above theorem, using an idea of \cite{CU:Reg} we can generalize results of de Fernex and Ein \cite[Corollary 1.4]{Ein:VanishLCPairs} and Chardin and Ulrich \cite[Theorem 0.1]{CU:Reg} on the Castelnuovo-Mumford regularity bound. The bound was first established in \cite{BEL} for nonsingular case and then generalized in \cite{CU:Reg} for rational singular case. \cite{BEL} and \cite{CU:Reg} allow variety to have some not too bad very singular loci but \cite{Ein:VanishLCPairs} cannot allow this situation, which we are able to handle now (cf. \cite[Remark 5.2]{Ein:VanishLCPairs}). Recall that a coherent sheaf $\sF$ on the projecive space $\nP^n$ is said to be $m$-regular if $H^i(\nP^n, \sF(m-i))=0$ for all $i>0$. The minimal such number $m$ is called the {\em Castelnuovo-Mumford regularity} of $\sF$ and is denoted by $\reg \sF$. If $X$ is a subscheme of $\nP^n$ defined by an ideal sheaf $\sI_X$ then the regularity of $X$ is defined to be $\reg X=\reg\sI_X$.

\begin{corollary}\label{cor03} Let $X\subset \nP^n$ be a reduced equidimensional subscheme of codimension $c$ defined by the equations of degree $d_1\geq d_2\geq \cdots\geq d_t$ and let $Z=\Supp (\sI_X/\sI(\nP^n,cX))$ where $\sI(\nP^n,cX)$ is the multiplier ideal sheaf. Assume that $\dim Z\leq 1$ and each one dimensional irreducible component of $Z$ has at least one point at which $X$ is a local complete intersection.  Then
$$\reg X\leq \sum^c_{i=1}d_i-c+1$$
and equality holds if and only if $X$ is a complete intersection in $\nP^n$.
\end{corollary}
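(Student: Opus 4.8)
The plan is to deduce the bound from liaison, in the spirit of Bertram--Ein--Lazarsfeld and of Theorem \ref{prop01}. Write $\delta=d_1+\cdots+d_c$. First I would form the generic link $Y$ of $X$ via a complete intersection $V$ cut out by $c$ general forms of degrees $d_1,\dots,d_c$, as in Theorem \ref{prop01}. Since $\sI_V=\sI_X\cap\sI_Y$, the image of $\sI_X$ in $\sO_Y$ is $\sI_X\cdot\sO_Y\simeq\sI_X/\sI_V$, and the canonical (bottom) isomorphism of Theorem \ref{prop01}(2), namely $\omega_Y\simeq\sI_X\cdot\sO_Y\otimes\omega_A(\delta)$ with $\omega_A=\sO_{\nP^n}(-n-1)$, gives $\sI_X/\sI_V\simeq\omega_Y(n+1-\delta)$. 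Feeding this into
\[ 0\longrightarrow\sI_V\longrightarrow\sI_X\longrightarrow\omega_Y(n+1-\delta)\longrightarrow0 \]
and using that the complete intersection satisfies $\reg\sI_V=\delta-c+1$, the inequality $\reg X\le\delta-c+1$ reduces to showing that $\omega_Y(n+1-\delta)$ is $(\delta-c+1)$-regular, i.e. that
\[ H^i\bigl(Y,\,\omega_Y(n+2-c-i)\bigr)=0\qquad(i>0). \]

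The heart of the matter is the corresponding vanishing for the Grauert--Riemenschneider sheaf $\omega^{GR}_Y$. Here I would work on the resolution built in the proof of Theorem \ref{prop01}: the log resolution $f:\overline A\to\nP^n$ with $\sI_X\cdot\sO_{\overline A}=\sO_{\overline A}(-E)$, and the strict transform $\overline Y=D_1\cap\cdots\cap D_c$, a smooth complete intersection with $D_j$ linearly equivalent to $M_j:=f^{*}d_jH-E$, where $H$ is the hyperplane class. Then $\omega^{GR}_Y=f_*\omega_{\overline Y}$ and, by Grauert--Riemenschneider, $R^qf_*\omega_{\overline Y}=0$ for $q>0$, so $H^i(Y,\omega^{GR}_Y(m))=H^i(\overline Y,\omega_{\overline Y}\otimes f^{*}\sO(m))$. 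Adjunction gives $\omega_{\overline Y}\simeq\bigl(\omega_{\overline A}\otimes\sO(\textstyle\sum_jM_j)\bigr)|_{\overline Y}$, and feeding the Koszul resolution of $\sO_{\overline Y}$ on $\overline A$ into this, the vanishing for $m=n+2-c-i$ reduces to Kawamata--Viehweg vanishing $H^{>0}(\overline A,\omega_{\overline A}\otimes\sO(N_S))=0$ for the twists $N_S=\sum_{j\notin S}M_j+(n+2-c-i)\,f^{*}H$, indexed by $S\subseteq\{1,\dots,c\}$ (the crucial cancellation being that $\sum_{j\in S}d_j$ drops out). Since $1\le i\le n-c$ forces the coefficient $n+2-c-i\ge2$, the pullback summand is nef and big, so everything comes down to the nefness of the remaining $M_j$. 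This is where the multi-degree refinement enters: although $M_j$ need not be nef on all of $\overline A$, each system $\mf{B}_j$ restricts to a base-point-free system on the partial intersection $D_1\cap\cdots\cap D_{j-1}$ (Claim \ref{claim05} and the Bertini steps of Theorem \ref{prop01}), so the cleanest rigorous route is to prove the vanishing by descending induction, peeling off one $D_j$ at a time and applying Kawamata--Viehweg on each smooth partial intersection. I expect the positivity bookkeeping to be routine once the induction is set up, and I would arrange it so that the same computation also yields $H^i(\omega^{GR}_Y(n+1-c-i))=0$, which the equality case will need.

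Next I would pass from $\omega^{GR}_Y$ to $\omega_Y$ through the trace sequence of Theorem \ref{prop01}(2),
\[ 0\longrightarrow\omega^{GR}_Y\longrightarrow\omega_Y\longrightarrow Q\longrightarrow0, \]
whose cokernel $Q\simeq\bigl(\sI_X\cdot\sO_Y/\sI(\nP^n,cX)\cdot\sO_Y\bigr)\otimes\omega_A(\delta)$ is supported on $Z\cap Y$, hence on a set of dimension at most $1$ by hypothesis. For $i\ge2$ one has $H^i(Q(m))=0$ purely for dimension reasons, so the vanishing of $H^i(\omega^{GR}_Y(n+2-c-i))$ passes to $\omega_Y$. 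The one remaining and genuinely delicate case is $i=1$: since $H^1(\omega^{GR}_Y(n+1-c))=0$, the group $H^1(\omega_Y(n+1-c))$ injects into $H^1\bigl(Q(n+1-c)\bigr)$, a cohomology group living on the curve $Z\cap Y$, and I must force it to vanish. This is exactly the step that uses the hypothesis that every one-dimensional component of $Z$ carries a point at which $X$ is a local complete intersection: I would analyze $Q$ locally near such a point, where the liaison structure of $Y$ is well understood, in order to show that $Q$ restricted to that curve is controlled enough (a cyclic-type quotient of bounded degree) that the fixed twist already kills $H^1(Q(\ell))$ for $\ell\ge n-c$. I expect this local-to-global control of $Q$ along the one-dimensional locus to be the main obstacle of the whole argument; when $\dim Z=0$ (as in Corollary \ref{cor02}) it evaporates, since then $Q$ is supported at finitely many points and $H^1(Q(\ell))=0$ automatically.

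Finally, for the characterization of equality I would compare $X$ with the complete intersection $V$ at the top cohomological degree. If $X$ is itself a complete intersection then $Y=\varnothing$, $\sI_X=\sI_V$, and $\reg X=\delta-c+1$ by direct computation. Conversely, if $Y\neq\varnothing$ I would run the same exact sequence at the level $\delta-c$. The lower terms $H^i(\sI_X(\delta-c-i))$ with $2\le i\le n-c$ vanish exactly as above; the term $i=1$ is handled by the same $H^1(Q(n-c))=0$ established in the previous step; and the top term $H^{n-c+1}(\sI_X(\delta-n-1))$ is the cokernel of the connecting map $H^{n-c}(Y,\omega_Y)\to H^{n-c+1}(\sI_V(\delta-n-1))$. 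Both groups are one-dimensional, being Serre-dual to $H^0(\sO_Y)$ and $H^0(\sO_V)$ respectively, and this connecting map is the dual of the restriction $H^0(\sO_V)\to H^0(\sO_Y)$, which is injective because $V$ is connected and $Y\neq\varnothing$; hence the map is surjective and the top term vanishes. Thus $Y\neq\varnothing$ makes $\sI_X$ in fact $(\delta-c)$-regular, so $\reg X<\delta-c+1$. Therefore equality holds precisely when $Y=\varnothing$, i.e. precisely when $X$ is a complete intersection.
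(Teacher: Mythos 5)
There is a genuine gap, and it sits exactly where you flagged ``the main obstacle'': the vanishing of $H^1(\sQ(\ell))$ when $\dim Z=1$. Your plan --- to analyze $\sQ$ locally near an lci point of a one-dimensional component of $Z$ and show that $\sQ$ restricted to that curve is ``a cyclic-type quotient of bounded degree'' --- is not carried out, and would not work as stated: a fixed twist does not kill $H^1$ of an arbitrary coherent sheaf on a curve without a degree bound, and the local structure of $\sQ$ at one point gives no control over $\sQ$ along the rest of the component. In fact $\sQ$ is simply \emph{zero} near such a point: since $X$ is a local complete intersection of codimension $c$ there, $c$ general combinations of the defining equations already generate $\sI_X$ locally, so the general complete intersection $V$ coincides with $X$ near that point, and $Y$ (defined by $\sI_Y=(\sI_V:\sI_X)$) avoids it. This is the correct --- and the paper's --- use of the hypothesis: it is a transversality device, not a local-structure device. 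Because $Y$ misses a point of every one-dimensional component of $Z$, no such component can be contained in $Y$; hence for $V$ general one has $\dim(Y\cap Z)\le 0$, so $\sQ$, being supported on $Y\cap Z$, has finite support, $H^1(\sQ(\ell))=0$ for every $\ell$ trivially, and the ``$\dim Z=0$'' case you said evaporates is in fact the only case that ever occurs. With this substitution your argument closes; without it, the $i=1$ step is unproven.

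The rest of your proposal follows the paper's route: the sequence $0\to\sI_V\to\sI_X\to\omega_Y(n+1-\delta)\to 0$, regularity of $\omega_Y$ obtained from $\omega^{GR}_Y$ through the trace sequence, and the equality case via the top-degree connecting map dual to the injection $H^0(\sO_V)\to H^0(\sO_Y)$ --- the paper compresses this last point into ``it is immediate that $\reg\sI_X\le d-c$'' when $Y\neq\emptyset$, and your duality computation is the right way to fill that in. One simplification worth making: your Koszul/descending-induction scheme for the groups $H^i(\overline Y,\omega_{\overline Y}\otimes f^*\sO(m))$ is unnecessary. Since $f|_{\overline Y}$ is birational onto $Y$, the line bundle $f^*\sO(1)|_{\overline Y}$ is nef and big, so Kawamata--Viehweg applied directly on the smooth variety $\overline Y$, combined with Grauert--Riemenschneider vanishing for $R^{>0}f_*\omega_{\overline Y}$, gives $H^i(\omega^{GR}_Y(m))=0$ for all $i>0$ and $m\ge 1$, i.e.\ $\reg\omega^{GR}_Y\le\dim Y+1$, which is all you need; the nefness of the individual divisors $M_j$ never has to enter the argument.
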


\begin{proof}
If $X$ is a complete intersection then the result is clear. So in the sequel we assume $X$ is not a complete intersection. Then take a generic link $Y$ of $X$ via a complete intersection $V$ cut out by the general equations of $X$ of degree $d_1\geq d_2\geq \cdots \geq d_c$. By the assumption on $Z$ we can choose $V$ general such that $Y\cap  Z$ has dimension $\leq 0$, i.e. no any dimension one irreducible component of $Z$ can be contained in $Y$. This means that from Theorem \ref{prop01} we have the following exact sequence
\begin{equation}\label{eq18}
0\longrightarrow \omega^{GR}_Y\longrightarrow \omega_Y\longrightarrow \sQ\longrightarrow 0
\end{equation}
where $\dim \Supp\sQ\leq 0$.

Denote by $d:=\sum^c_{j=1}d_j$ and $r:=\dim V=\dim X=\dim Y$.
It is clear that from the Kawamata-Viehweg vanishing theorem we can deduce $\reg \omega^{GR}_Y = r+1$ and therefore $\reg \omega_Y=r+1$ by the sequence (\ref{eq18}). Now recall $\omega_Y=\sI_X\cdot\sO_V\otimes \omega_V$. Since $V$ is a complete intersection we see $\reg \I_V=d-c+1$ and $\omega_V=\sO_V(d-n-1)$ by Koszul resolution and thus $\omega_Y=\sI_X\cdot\sO_V(d-n-1)$. Now from the exact sequence
$$0\longrightarrow \sI_V\longrightarrow \sI_X\longrightarrow \sI_X\cdot\sO_V\longrightarrow 0,$$
it is immediately that $\reg \sI_X\leq d-c$.
\end{proof}

\begin{remark} In the corollary, by Ein's Lemma the set $Z$ can be equivalently defined as $$Z:=\{x\in \nP^n\ |\ \sI(\nP^n, (c-1)X) \mbox{ is non trivial at } x\}.$$
The condition on $Z$ in the corollary already includes conditions discussed in de Fernex and Ein \cite{Ein:VanishLCPairs} and Chardin and Ulrich \cite{CU:Reg} and is certainly more general. It is worth mentioning that the above liaison method to bound the regularity also can be used directly to study so called  multiregularity once the correct corresponding form of generic linkage can be built.
\end{remark}

In Theorem \ref{prop01}, a generic link $Y$ is usually not necessarily irreducible. One important case in applications is when $X$ is cut out by sections of the same degree, i.e., $d_1=\cdots=d_t$. If the defining ideal sheaf $\sI_X$ has enough sections then it is possible that a generic link $Y$ is irreducible. One way to see this is by using the $s$-invariant of $\sI_X$ with respect to $L$, which measures the positivity of $\sI_X$. We recall its definition and refer to \cite{Lazarsfeld:PosAG1} for further reference.

\begin{definition} Given an ideal sheaf $\sI$ on $A$ let
$\mu:W=\Bl_{\sI}A\longrightarrow A$ be the blowing-up
of $A$ along the ideal $\sI$ with an exceptional Cartier
divisor $E$ on $W$, such that $\sI\cdot\sO_W=\sO_W(-E)$. Let $L$ be
an ample line bundle on $A$. We define the {\em $s$-invariant} of
$\sI$ with respect to $L$ to be the positive real number
$$s_L(\sI):=\min\{\ s\ |\ s\mu^*L-E \ \mbox{ is nef }\}.$$
Here $ s\mu^*L-E$ is considered as an $\nR$-divisor on $W$.
\end{definition}

\begin{remark} For example, in the projective space case the line bundle is usually taken to be the hyperplane divisor and simply write the $s$-invariant as $s(\sI)$. In this case suppose that $d$ is an integer such that $\sI(d)$ is generated by global sections, then it is easy to see that $s(\sI)\leq d$, i.e., the $s$-invariant of $\sI$ is always bounded by a generating degree of $\sI$. There is an example in \cite{Ein:PosiComlIdeal} showing that the $s$-invariant could be an irrational number.
\end{remark}

\begin{corollary} With notation and assumption as in Theorem \ref{prop01} assume further that $L$ is ample and $X$ is cut out by the sections of the same degree, i.e., $d_1=\cdots=d_t=d$. If the $s$-invariant of $\sI_X$ with respect to $L$ is strictly smaller than $d$, then $Y$ is nonempty and irreducible.
\end{corollary}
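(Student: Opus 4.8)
The plan is to exploit two special features of the hypotheses: since all the $d_i$ equal $d$, the linear systems $\mf{B}_i$ produced in the proof of Theorem \ref{prop01} all coincide with a single base point free system, and the $s$-invariant condition forces the associated morphism to have image of maximal dimension, so that a Bertini irreducibility argument applies. First I would carry over the notation of that proof: let $f\colon\overline{A}\longrightarrow A$ be the log resolution with $\sI_X\cdot\sO_{\overline{A}}=\sO_{\overline{A}}(-E)$, and let $\mf{b}=|H^0(A,\sI_X\otimes L^d)|$. Because $d_1=\cdots=d_t=d$, every $\mf{b}_i$ equals $\mf{b}$, hence every $\mf{B}_i=f^*\mf{b}-E$ equals one base point free system $\mf{B}\subset|f^*L^d(-E)|$ (base point freeness is part (a) of Claim \ref{claim05}). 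Thus $\mf{B}$ defines a morphism $\phi\colon\overline{A}\longrightarrow\nP^N$ with $\phi^*\sO_{\nP^N}(1)=\sO_{\overline{A}}(H)$, where $H:=df^*L-E$, and the general divisors $D_1,\dots,D_c\in\mf{B}$ chosen there are the preimages $\phi^{-1}(H_1),\dots,\phi^{-1}(H_c)$ of general hyperplanes. Consequently the strict transform of $Y$ is $\overline{Y}=D_1\cap\cdots\cap D_c=\phi^{-1}(\Lambda)$, where $\Lambda=H_1\cap\cdots\cap H_c$ is a general linear subspace of codimension $c$ in $\nP^N$.

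Next I would translate $s_L(\sI_X)<d$ into positivity of $H$. Arranging $f$ to dominate the blow-up $\mu\colon W=\Bl_{\sI_X}A\longrightarrow A$ (possible since $\sI_X\cdot\sO_{\overline{A}}$ is invertible), write $E_W$ for the exceptional divisor with $\sI_X\cdot\sO_W=\sO_W(-E_W)$. The definition of the $s$-invariant gives that $s_L(\sI_X)\mu^*L-E_W$ is nef, so
\[
d\mu^*L-E_W=\big(d-s_L(\sI_X)\big)\mu^*L+\big(s_L(\sI_X)\mu^*L-E_W\big)
\]
is the sum of a big and nef class (as $d-s_L(\sI_X)>0$ and $\mu^*L$ is big and nef) and a nef class, hence big and nef on $W$. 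Pulling back, $H=df^*L-E$ is big and nef on $\overline{A}$, so $H^{n}>0$ with $n=\dim A$. Since $H=\phi^*\sO_{\nP^N}(1)$ is numerically trivial on the fibres of $\phi$, the case $\dim\phi(\overline{A})=m<n$ would give $H^{m+1}\equiv 0$ and hence $H^{n}\equiv 0$, contradicting $H^n>0$; therefore $\dim\phi(\overline{A})=n$ and $\phi$ is generically finite onto its image.

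Finally I would invoke the Bertini irreducibility theorem. As $\overline{A}$ is irreducible (it is birational to the irreducible variety $A$) and $\dim\phi(\overline{A})=n\geq c+1$, using $\dim X=n-c\geq 1$, cutting by the $c$ general hyperplanes preserves irreducibility at each stage: after the $j$-th cut the image has dimension $n-j\geq 2$ for $0\leq j\leq c-1$. Hence $\overline{Y}=\phi^{-1}(\Lambda)$ is nonempty and irreducible. Since by Theorem \ref{prop01} the morphism $f$ restricts to a birational map $\overline{Y}\longrightarrow Y$ onto the link and $Y=f(\overline{Y})$ set-theoretically, $Y$ is nonempty and irreducible as well.

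The main obstacle is the middle step: extracting bigness of $H$ from the $s$-invariant inequality and, above all, pinning the image dimension to exactly $n$ rather than merely asserting that $H$ is big. This rests on $H$ being the pullback $\phi^*\sO_{\nP^N}(1)$ of the specific, possibly incomplete, system $\mf{B}$, so that its triviality on the fibres of $\phi$ combines with $H^n>0$ to force $\dim\phi(\overline{A})=n$. The remaining point to watch is keeping the Bertini count honest, namely that the image dimension stays at least $2$ through the last of the $c$ cuts, which is where the standing assumption $\dim X\geq 1$ enters.
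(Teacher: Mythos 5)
Your proof is correct, and its skeleton coincides with the paper's: all the systems $\mf{B}_i$ collapse to the single base point free system $\mf{B}=f^*\mf{b}-E$ defining a morphism $\phi$, the $s$-invariant hypothesis is used to show $\phi$ is generically finite, and Bertini irreducibility applied to $c$ general hyperplane pullbacks gives $\overline{Y}$, hence $Y=f(\overline{Y})$, nonempty and irreducible. The one step you handle genuinely differently is generic finiteness. The paper factors $\phi$ through the blow-up $A'=\Bl_X A$ via $g:\overline{A}\longrightarrow A'$ and observes that $s_L(\sI_X)<d$ makes $\mu^*L^d(-F)=\phi_{\mf{B}'}^*\sO_{\nP^r}(1)$ \emph{ample}, so $\phi_{\mf{B}'}$ is finite and $\phi=\phi_{\mf{B}'}\circ g$ is generically finite. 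You instead stay purely numerical: $d\mu^*L-E_W$ is big and nef as the sum of the big and nef class $(d-s_L(\sI_X))\mu^*L$ and the nef class $s_L(\sI_X)\mu^*L-E_W$, so $H=df^*L-E$ satisfies $H^n>0$, which by the projection formula rules out $\dim\phi(\overline{A})<n$. Both are sound (the paper's ampleness claim holds because $-E_W$ is $\mu$-ample, so one can write $d\mu^*L-E_W$ as a positive combination of an ample class and a nef class), but your version needs only bigness rather than ampleness and avoids any finiteness-of-morphisms argument, at the mild cost of the extra blow-up comparison to transport positivity to $\overline{A}$. You also replace the paper's single citation of the Bertini-type theorem (\cite[3.3.1]{Lazarsfeld:PosAG1}, applied under the condition $\dim\phi_{\mf{B}}(\overline{A})>c$) by an explicit iterated cut, and in doing so you make visible the hypothesis $n>c$, i.e. $\dim X\geq 1$, which the paper's inequality $\dim\phi_{\mf{B}}(\overline{A})>c$ uses silently; this is worth keeping explicit, since for $\dim X=0$ the fibre $\phi^{-1}(\Lambda)$ is a general finite set and irreducibility genuinely fails.
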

\begin{proof} We keep notation as in the proof of Theorem \ref{prop01} but set $d_1=\cdots=d_t=d$. Now this time the linear systems $\mf{B}_1,\cdots, \mf{B}_t$ are all the same as the linear system $\mf{B}:=f^*{\mf{b}}-E$, where $\mf{b}$ is the sub-linear system of $|L^d|$ determined by the vector space $H^0(A,\sI_X\otimes L^d)$. Notice that $\mf{B}$ is a sub-linear system of $|f^*L^d(-E)|$ and is base point free. Also notice that $\dim \mf{B}=\dim \mf{b}$ since $f$ is dominant and $E$ is the basic locus of $f^*\mf{b}$.

All we need is to show the subscheme $\overline{Y}$, which is the intersection of $c$ general divisors $D_i\in \mf{B}$, is irreducible. Recall that $\mf{B}$ is base point free, it then gives a morphism to a projective space
$$\phi_{\mf{B}}:\overline{A}\longrightarrow \nP^r,$$
where $r=\dim \mf{B}$ such that $f^*L^d(-E)=\phi^*_{\mf{B}}\sO_{\nP^r}(1)$.
We claim that the morphism $\phi_{\mf{B}}$ is generically finite. To see this, let $\mu: A'=\Bl_XA\longrightarrow A$ be the blowing up of $A$ along $X$ with an exceptional divisor $F$ such that $\sI_X\cdot\sO_{A'}=\sO_{A'}(-F)$. By the universal property of blowing-ups we have $g:\overline{A}\longrightarrow A'$ such that $f=g\circ\mu $ and $g^*F=E$. Notice that $g$ is generically finite. Now the system $\mf{B}':=\mu^*\mf{b}-F$ is base point free and then gives a morphism $\phi_{\mf{B}'}$to $\nP^r$, which commutes with $\phi_{\mf{B}}$, i.e. $\phi_{\mf{B}}=\phi_{\mf{B'}}\circ g$
$$\xymatrix{
\overline{A} \ar[rr]^{\phi_{\mf{B}}}\ar[dr]_{g} &   & \nP^r  \\
                         & A' \ar[ur]_{\phi_{\mf{B'}}} &
}$$
Since the $s$-invariant of $\sI_X$ with respect to $L$ is strictly smaller than $d$ the line bundle $\mu^*L^d(-F)$ is ample. But $\mu^*L^d(-F)=\phi_{\mf{B}'}^*\sO_{\nP^r}(1)$ so the morphism $\phi_{\mf{B'}}$ is finite. Thus $\phi_{\mf{B}}$ is generically finite and we have $\dim \phi_{\mf{B}}(\overline{A})= \dim \overline{A}$. Now by the theorem of \cite[3.3.1]{Lazarsfeld:PosAG1}, the subscheme $\overline{Y}$ is nonempty and irreducible since $\dim \phi_{\mf{B}}(\overline{A})>c$. Therefore the generic link $Y$ to $X$ is also nonempty and irreducible.
\end{proof}

\begin{remark} Having Theorem \ref{prop01} in hand, it is then easy to get those similar corollaries mentioned in the previous sections (cf. Remark \ref{rmk02}, \ref{rmk03} and \ref{rmk04}). So we leave them to the reader.
\end{remark}

\bibliographystyle{alpha}

\end{document}